\newcommand{\cU}{\mathcal{U}}
\newcommand{\ang}[1]{\langle #1\rangle}
\newcommand{\Id}{\mathcal{I}}
\newtheorem{theorem}{Theorem}[section]
\newtheorem{lemma}[theorem]{Lemma}
\theoremstyle{definition}
\newtheorem{question}[theorem]{Question}
\title{Tukey order among $F_{\sigma}$ ideals}
\begin{document}

\author[J.\,He]{Jialiang He }
%    Address of record for the research reported here
\address{College of Mathematics, Sichuan University, Chengdu, Sichuan, 610064, China}
%    Current address
\curraddr{}
\email{jialianghe@scu.edu.cn}
%    \thanks will become a 1st page footnote.
\thanks{The first author was partially supported by NSF of China
grants 11801386 and 11771311.}
%180010236

\author[M.\,Hru\v{s}\'ak]{Michael Hru\v{s}\'ak}
\address{Centro de Ciencias Matem\'aticas, UNAM, Morelia, Mexico}
\email{michael@matmor.unam.mx}
\thanks{The second author was partially supported by PAPIIT grants IN 100317 and IN 104220, and a CONACyT grant A1-S-16164.}
%    Information for second author

\author[D.\,Rojas-Rebolledo]{Diego Rojas-Rebolledo}
\address{Department of Mathematics and Computing Sciences, Saint Mary's University, Halifax, Canada}
\email{Diego.Rojas@smu.ca}
%\thanks{...}

\author[S.\,Solecki]{ S{\l}awomir Solecki}
\address{Department  of Mathematics, Cornell University, Ithaca, NY 14853, USA}
\email{ssolecki@cornell.edu}
\thanks{The fourth author was partially supported by NSF grant DMS-1954069.}

\keywords{Tukey order, $F_\sigma$ ideal, flat ideal, gradually flat ideal}
\subjclass[2010]{03E15, 03E05, 06A07}

\begin{abstract} We investigate the Tukey order in the class of $F_\sigma$ ideals of subsets of $\omega$.
We show that no nontrivial $F_\sigma$ ideal is Tukey below a $G_\delta$ ideal of compact sets.
We introduce the notions of flat ideals and gradually flat ideals. We prove a dichotomy theorem for flat ideals isolating gradual flatness as the side
of the dichotomy that is structurally good. We give diverse characterizations of gradual flatness among flat ideals using Tukey reductions and games.
For example, we show that
gradually flat ideals are precisely those flat ideals that are Tukey below the ideal of density zero sets.
\end{abstract}

\maketitle

\section{Introduction}

The present paper is a contribution to the study of the structure of the {\bf Tukey order} on definable directed sets; for background, see
\cite{fremlin-91, Fr2, hrusak-filters, isbell, LV, Ma, schmidt, solecki, ST04, todorcevic, tukey}.
For two directed orders $P$ and $Q$, we write
$P\leq_T  Q$ if there is a function $h: P \to  Q$ sending unbounded sets to unbounded sets.\footnote{In this paper, {\em bounded} always means bounded from above, and {\em unbounded} means unbounded from above.}
Such an $h$ is called a {\bf Tukey map} and $P$ is said to be {\bf Tukey reducible to} $Q$. We write
$P\equiv_T Q$ if both $P\leq_T Q$ and $Q\leq_T P$.
Tukey order was introduced in \cite{tukey} to study convergence of nets and was recast by Schmidt \cite{schmidt} and Isbell \cite{isbell} to compare cofinal types of directed orders.
The main class of directed orders considered by us are
$F_\sigma$ ideals of subsets of $\omega$ taken with inclusion as the order relation.

Recall that a subset of a metric space is $F_\sigma$ if it is the union of a countable family of closed sets; it is $G_\delta$ if it is 
the intersection of a countable family of open sets. To be $F_\sigma$ for an ideal of subsets of $\omega$ means $F_\sigma$ 
with respect to 
the product topology on ${\mathcal P}(\omega)= \{ 0,1\}^\omega$, with $\{ 0,1\}$ given the discrete topology. 
To be $G_\delta$ for an ideal of compact subsets of 
a Polish space $X$ means $G_\delta$ with respect to the Vietoris topology on the space of all compact subsets of $X$.

An important role in the study of ideals of subsets of $\omega$ is played by lower semicontinuous submeasures.
A function $\varphi\colon {\mathcal P}(\omega)\to [0,\infty]$ is a {\bf submeasure} if
\begin{enumerate}
\item[---] $\varphi(\emptyset)=0$,

\item[---] $\varphi(a)\leq\varphi(b)$, for $a\subseteq b$,

\item[---] $\varphi(a\cup b)\leq \varphi(a) +\varphi(b)$, for all $a,b\subseteq\omega$.
\end{enumerate}
A submeasure $\varphi$ is {\bf lower semicontinuous} ({\bf lsc}) if
\begin{enumerate}
\item[---] $\varphi(a)=\lim_{n\to\infty} \varphi(a\cap [0,n])$, for all $a\subseteq \omega$.
\end{enumerate}
By a theorem of Mazur \cite{Maz}, an ideal $\Id$ of subsets of $\omega$ is $F_\sigma$ precisely when there is a lower semicontinuous submeasure
$\varphi$ such that
$$
\mathcal I={\rm Fin} (\varphi)=\{a\subseteq\omega: \varphi(a)<\infty\}.
$$

We study the place  $F_\sigma$ ideals of subsets of $\omega$ occupy in the Tukey order.
As a point of reference we use the following, now standard, diagram which summarizes
the known Tukey reductions among
the well studied directed sets; see \cite{fremlin-91, Fr2, isbell, LV, matrai-ken, moore-solecki, solecki-11, solecki, ST04}.
As usual, in the diagram, an arrow denotes reduction and the absence of arrows non-reduction.
The partial orders in the diagram
are defined below it.
\begin{center}
 \begin{tikzpicture}
\node (E1) at (0:1cm) {$\omega^\omega$};
\node (E2) at (130: 1.6cm) {$\mathrm{NWD}$};
\node (E9) at (118: 2.2cm ) {$\mathcal I_0$};
\node (E3) at (60:2cm) {$l_1$};
\node (E4) at (90: 2.5cm) {$[\mathfrak c]^{<\omega}$};
\node (E6) at (-45:1.4cm) {$\omega$};

\node (E7) at  (40: 1.3cm) {$\mathcal Z_0$};
\node (E8) at (155: 1.1cm) {$\mathcal E_\mu~$};
\draw[->,thick]
(E8) edge (E7)
(E2) edge (E3)
(E3) edge (E4)
(E6) edge (E1)

(E1) edge (E7)
(E7) edge (E3)
(E8) edge (E2)
(E1) edge (E8)
(E2) edge (E9)
(E9) edge (E4)
%(E8) edge (E5)
;
\end{tikzpicture}

\end{center}
The partial orders in the diagram are Tukey equivalent to ideals of subsets of $\omega$ or to ideals of compact subsets of compact metric spaces.
The order $[\mathfrak c]^{<\omega}$ of finite subsets of $2^\omega$ taken with inclusion as the order relation is Tukey equivalent to an $F_\sigma$ ideal
of subsets of $\omega$, see \cite[Section 5, Proposition 3]{LV}. 
The \emph{summable ideal}  and the \emph{density zero ideal} are defined, respectively, as
\[
l_1=\left\{A\subseteq \omega: \sum_{n\in A}\frac1{n+1}<\infty\right\}\;\hbox{ and }\;\mathcal{Z}_0=\left\{A\subseteq\omega: \lim_n \frac{|A\cap n|}{n}=0\right\}.
\]
The ideal $\mathrm{Fin}$ of finite subsets of $\omega$ is Tukey equivalent to $\omega$, and
$$
\omega^\omega\equiv_T\emptyset\times {\rm{fin}}=\left\{A\subseteq\omega\times\omega: \ (\forall n\in\omega) |\{m\in\omega: (n,m)\in A\}|<\omega \right\},
$$
The ideal $\mathrm{NWD}$ is
the ideal of closed nowhere dense subsets of $2^\omega$, and $\mathcal E_\mu$ is the ideal of closed measure zero subsets of $2^\omega$. To define $\mathcal I_0$, denote by
$\mathcal S$  the collection of all block sequences of finite partial functions from $\omega$ into $2$  of even or infinite length, and  for $\overline s\in \mathcal S$ let
$$[\overline s]=\{ x\in 2^\omega: \ \forall i\  (s_{2i}\subseteq x \text{ or } s_{2i+1}\subseteq x)\}$$ and
$$
\mathcal I_0=\{K\subseteq 2^\omega: K\text{ is compact and } K\cap [\overline s] \text{ is nowhere dense in } [\overline s] \text{ for all }\overline s\in\mathcal S\}.
$$
The only $F_\sigma$ ideals appearing in the diagram are $\omega$, $[\mathfrak c]^{<\omega}$ and $l_1$.

The left column of the above diagram, or more accurately the class of all $G_\delta$ ideals of compact subsets of Polish spaces, is often referred to as
the {\bf category leaf}. The right column, or more precisely the class of all analytic P-ideals of subsets of $\omega$, is called the {\bf measure leaf}.

In the present paper, we study
a third leaf, one could call the {\bf $F_\sigma$ leaf}, consisting of $F_\sigma$ ideals of subsets of $\omega$.
First, we look into the relationship between $F_\sigma$ ideals
and the category leaf. We show in Theorem~\ref{thm9} that, except for trivial situations, $F_\sigma$ ideals are not reducible to $G_\delta$ ideals of compact subsets of Polish spaces.
Next, we restrict our
attention to a class of $F_\sigma$ ideals that we call flat. We prove that flat ideals are not basic, except, again, for trivial situations. This and the previous result indicate
distinctness of the leaf studied in this paper from the other two leaves.
Within the class of flat ideals, we show a dichotomy, Theorem~\ref{dichotomy}, saying that a flat ideal is Tukey equivalent to the top order in
the diagram, namely $[\mathfrak c]^{<\omega}$, or it has a structural property, which we call gradual flatness. Then, in Theorem~\ref{thm8}, we compare gradual flatness with
the measure leaf by
showing that such ideals are reducible to the density zero ideal ${\mathcal Z}_0$. In fact, Theorem~\ref{thm8} shows that gradual flatness is a robust property, as it turns out to be
equivalent to a number of diverse conditions. In the remarks following Theorem~\ref{thm8}, 
we point out the high complexity of the structure of the Tukey reduction among gradually flat ideals.

\section{$F_{\sigma}$ ideals of subsets of $\omega$ and $G_\delta$ ideals of compact sets}

We prove a theorem that is an indication of ``orthogonality" between $F_\sigma$ ideals of subsets of $\omega$ and $G_\delta$ 
ideals of compact subsets of Polish spaces. By
\cite[Corollary~6.4]{ST04}, a similar
relation holds between analytic P-ideals of subsets of $\omega$ and $G_\delta$ ideals of compact subsets of Polish spaces.

\begin{theorem}\label{thm9} If $\mathcal J$ is a $G_\delta$ ideal of compact subsets of a Polish space and $\mathcal I$ is an uncountably generated $F_\sigma$ ideal on $\omega$, then $\mathcal I\not\leq_T \mathcal J$.
\end{theorem}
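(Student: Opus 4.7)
The plan is to argue by contradiction: assume a Tukey map $h\colon\mathcal{I}\to\mathcal{J}$ exists, and use the joint structure of the two ideals to produce a countable cofinal subfamily of $\mathcal{I}$, contradicting the hypothesis that $\mathcal{I}$ is uncountably generated.

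First I would fix canonical presentations. By Mazur's theorem $\mathcal{I}=\mathrm{Fin}(\varphi)$ for a lower semicontinuous submeasure $\varphi$, so $\mathcal{I}=\bigcup_n F_n$ with $F_n=\{a\subseteq\omega:\varphi(a)\le n\}$ hereditary and compact in the product topology on $\cP(\omega)$; and I would write $\mathcal{J}=\bigcap_m G_m$ with each $G_m$ open in the Vietoris topology on the space of compact subsets of $X$. The Tukey hypothesis supplies, for every $L\in\mathcal{J}$, a bound $g(L)\in\mathcal{I}$ of the preimage $\{a\in\mathcal{I}:h(a)\subseteq L\}$; in particular, whenever $h(a)\subseteq L$ we have $a\subseteq g(L)$.

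The main reduction is that it would suffice to find a countable family $\{L_k\}_{k\in\omega}\subseteq\mathcal{J}$ dominating $h[\mathcal{I}]$ in $\mathcal{J}$, i.e.\ such that for every $a\in\mathcal{I}$ some $L_k$ satisfies $h(a)\subseteq L_k$; then $\{g(L_k):k\in\omega\}$ would be countable and cofinal in $\mathcal{I}$, contradicting uncountable generation. To organize the search I would fix a countable Vietoris-basis $\{B_j\}$ of the compact-subset space of $X$; since every $h(a)\in\mathcal{J}$ lies in some $B_j\subseteq G_m$ for each $m$, the pairs $(m,j)$ with $B_j\subseteq G_m$ give a countable collection of possible \emph{addresses} for $h(a)$. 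For each such pair I would try to select a single $L_{m,j}\in\mathcal{J}$ dominating every $h(a)\in B_j$.

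The main obstacle is producing these dominators $L_{m,j}$: for $\mathcal{J}$ of uncountable cofinality (such as $\mathrm{NWD}$), the intersection $B_j\cap\mathcal{J}$ is itself unbounded in $\mathcal{J}$, so the dominator cannot come from the ideal structure of $\mathcal{J}$ alone. It has to be extracted using the Tukey map together with the compactness of the $F_n$'s and the nested structure $\mathcal{J}=\bigcap_m G_m$. I expect the argument at this point to be a Baire-category diagonalization on each compact $F_n$: assuming no suitable countable family exists, one would recursively build $a_k\in F_n$ whose images $h(a_k)$ escape every candidate dominator, pass (by compactness, after restricting to subsequences that stay in a fixed compact piece of the ambient compact-subset space) to a Vietoris cluster point of the $h(a_k)$'s, and use the escape condition together with $\mathcal{J}=\bigcap_m G_m$ to witness a compact set outside $\mathcal{J}$ that nevertheless equals $h(a^{\infty})$ for a cluster point $a^{\infty}\in F_n$ of $(a_k)$, yielding the contradiction. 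Making this diagonalization precise---in particular controlling how the $h(a_k)$'s exit the layers $G_m$ while still producing a usable Vietoris limit---is the technical heart of the proof.
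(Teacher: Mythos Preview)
Your plan has a genuine gap at the step you yourself flag as ``the technical heart''. The Tukey map $h\colon\mathcal{I}\to\mathcal{J}$ is an \emph{arbitrary} function preserving unboundedness; it carries no continuity or measurability whatsoever. So when you pass to a cluster point $a^\infty\in F_n$ of the sequence $(a_k)$, there is no reason at all for $h(a^\infty)$ to be related to a Vietoris cluster point of $(h(a_k))$. The claimed equality ``a compact set outside $\mathcal{J}$ that nevertheless equals $h(a^\infty)$'' simply does not follow, and without it the contradiction evaporates. A secondary issue: for a non-compact Polish space $X$ the hyperspace $\mathcal{K}(X)$ is not compact, so the $h(a_k)$'s need not have any Vietoris cluster point at all; your parenthetical ``restricting to subsequences that stay in a fixed compact piece'' presupposes something you cannot arrange from the hypotheses.

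The paper's proof avoids this obstacle entirely by working with a game-theoretic invariant rather than with the map $h$ directly. It uses Fremlin's game $\Gamma_2(P)$: Player~I plays countable covers, Player~II refines and picks finite subsets, and Player~I wins if the union of the finite sets is bounded. One checks (i) a winning strategy for Player~I transfers downward under $\leq_T$; (ii) Player~II wins $\Gamma_2(\mathcal{I})$ for any uncountably generated $F_\sigma$ ideal (choose an uncountably generated piece of each cover and blow up the submeasure); (iii) Player~I wins $\Gamma_2(\mathcal{J})$ for any $G_\delta$ ideal of compact sets (cover by small Hausdorff-metric balls avoiding the successive closed layers of the complement, so the play converges to an element of $\mathcal{J}$). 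The game packages exactly the compactness/convergence argument you were reaching for, but applied on the $\mathcal{J}$ side only, where the topology is native, rather than pushed through an arbitrary $h$.
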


For the proof of this theorem, we use the second of Fremlin's games introduced in~\cite{fremlin}.
Let $P\not=\emptyset$ be a partially ordered set. The Game $\Gamma_2(P)$ is the two-player game where,  setting $U_{-1}=P$, 
the $n$-th move of the game is described for both players as follows. Given  $U_{n-1}\subseteq P$,
Player I plays a countable cover $\mathcal{U}_n$ of $U_{n-1}$, 
and Player II responds with $U_n\in\mathcal{U}_n$ and a finite set $I_n\subseteq U_n$. 
(By  {\em countable cover} here we mean that $\mathcal{U}_n$ is a countable family of sets 
and $\bigcup\mathcal{U}_n =U_{n-1}$.)
The following diagram gives a pictorial representation of the $n$-th move. \\

\medskip

\begin{tabular}{cccccc}
%& & & & & &
\multicolumn{1}{c|}{$I$}  &$\mathcal U_{n}$ ctble cover of $U_{n-1}$   \\ \hline
\multicolumn{1}{c|}{$II$} & &$U_n\in \mathcal U_n$,  $I_n\in [U_n]^{<\omega}$  \\
\end{tabular}\\

\medskip

Player I wins the game if the set $\bigcup_{n\in\omega}I_n$ is bounded in $P$,  otherwise Player II wins.

The theorem follows directly from the following three lemmas. The first one is easy to prove and is contained in \cite{fremlin}.

\begin{lemma}[\cite{fremlin}]\label{lemma2} Let $P$ and $Q$ be partially ordered sets such that $P\leq_T Q$.
\begin{enumerate}
\item If Player I has  a winning strategy in $\Gamma_2(Q)$, then Player I also has a  winning strategy in $\Gamma_2(P)$.
\item If Player II has  a winning strategy in $\Gamma_2(P)$, then Player II also has a  winning strategy in $\Gamma_2(Q)$.
\end{enumerate}
  \end{lemma}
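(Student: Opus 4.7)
My plan is a shadow-play argument driven by the Tukey map $h : P \to Q$ witnessing $P \leq_T Q$. By definition, $h$ sends unbounded subsets of $P$ to unbounded subsets of $Q$; contrapositively, whenever $h(A)$ is bounded in $Q$, $A$ is bounded in $P$. Both parts of the lemma will use the same two translations: pull back any countable cover $\mathcal{V}$ of $V \subseteq Q$ to the countable cover $\{h^{-1}(W) : W \in \mathcal{V}\}$ of $h^{-1}(V) \subseteq P$, and push forward any finite $I \subseteq P$ to the finite set $h(I) \subseteq Q$.

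For (1), given a winning strategy $\sigma$ for Player I in $\Gamma_2(Q)$, I would run in parallel with the actual game in $P$ a shadow play in $Q$ following $\sigma$, maintaining the invariant $U_n = h^{-1}(V_n)$ (with $V_{-1} = Q$ and $U_{-1} = P$). At round $n$, if $\sigma$ prescribes the cover $\mathcal{V}_n$ of $V_{n-1}$, Player I in $\Gamma_2(P)$ plays the pulled-back cover $\mathcal{U}_n$ of $U_{n-1}$; when Player II responds with $(U_n, I_n)$, with $U_n = h^{-1}(V_n)$ for some $V_n \in \mathcal{V}_n$, we feed $(V_n, h(I_n))$ into the shadow play (note $h(I_n) \subseteq V_n$ since $I_n \subseteq h^{-1}(V_n)$). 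Because $\sigma$ wins, $h(\bigcup_n I_n) = \bigcup_n h(I_n)$ is bounded in $Q$, so by the Tukey property $\bigcup_n I_n$ is bounded in $P$. Part (2) is entirely dual: given $\tau$ winning for Player II in $\Gamma_2(P)$, pull back each cover $\mathcal{V}_n$ played by Player I in $\Gamma_2(Q)$ to a cover $\mathcal{U}_n$ of $U_{n-1}$ in $P$, let $\tau$ choose a response $(h^{-1}(V_n), I_n)$, and have Player II in $\Gamma_2(Q)$ respond with $(V_n, h(I_n))$. Since $\tau$ wins, $\bigcup_n I_n$ is unbounded in $P$, hence $\bigcup_n h(I_n) = h(\bigcup_n I_n)$ is unbounded in $Q$.

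The main bookkeeping subtlety is that each cover $\mathcal{V}_n$ should be treated as an indexed family, so that Player II's selection $U_n \in \mathcal{U}_n$ canonically identifies a $V_n \in \mathcal{V}_n$ for continuing the shadow play; this is harmless once adopted as a convention. One also checks $\bigcup \mathcal{U}_n = U_{n-1}$ from $\bigcup \mathcal{V}_n = V_{n-1}$, which is immediate because $h^{-1}$ commutes with unions. No deeper obstacle arises, consistent with the fact that Fremlin records the lemma as easy.
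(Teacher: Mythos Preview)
Your shadow-play argument is correct and is exactly the standard proof. The paper itself does not give a proof of this lemma; it simply records it as easy and attributes it to Fremlin, so there is no proof in the paper to compare against beyond that remark.
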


\begin{lemma} Player II has a winning strategy in $\Gamma_2(\Id)$ for every uncountably generated $F_\sigma$ ideal $\mathcal I$ on $\omega$.
\end{lemma}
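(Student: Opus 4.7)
The plan is to fix a lower semicontinuous submeasure $\varphi$ with $\Id=\mathrm{Fin}(\varphi)$ (via Mazur's theorem) and give Player II a strategy that maintains the invariant that $U_n$ is not $\sigma$-bounded in $\Id$: there is no countable family $\{B_\ell\}_{\ell<\omega}\subseteq \Id$ with $U_n\subseteq\bigcup_\ell\cP(B_\ell)$. The starting position $U_{-1}=\Id$ satisfies this invariant because $\Id$ being $\sigma$-bounded in itself is exactly the statement that $\Id$ has a countable cofinal family, which is the negation of our hypothesis.

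Preservation of the invariant is straightforward: given a countable cover $\cU_n=\{V^n_k\}_k$ of $U_{n-1}$, if every $V^n_k$ were $\sigma$-bounded, say by some countable $\{B^{n,k}_\ell\}_\ell\subseteq\Id$, then the doubly-indexed (still countable) family $\{B^{n,k}_\ell\}_{k,\ell}$ would $\sigma$-bound $U_{n-1}=\bigcup_k V^n_k$, contradicting the invariant at $U_{n-1}$. So Player II can pick some non-$\sigma$-bounded $V^n_k$ as the new $U_n$.

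For the finite move, I would exploit that ``$U_n$ not $\sigma$-bounded'' is strictly stronger than ``$\bigcup U_n\notin \Id$'' (if $\bigcup U_n\in\Id$, the singleton $\{\bigcup U_n\}$ already $\sigma$-bounds $U_n$), so $\varphi(\bigcup U_n)=\infty$. Lower semicontinuity then gives $\varphi((\bigcup U_n)\cap [0,N])\to\infty$ as $N\to\infty$. For any target $n$, pick $N$ with $\varphi((\bigcup U_n)\cap [0,N])\ge n$ and, for each $m\in(\bigcup U_n)\cap[0,N]$, choose some $A_m\in U_n$ with $m\in A_m$; this produces a finite $I_n\subseteq U_n$ whose union covers $(\bigcup U_n)\cap[0,N]$, so $\varphi(\bigcup I_n)\ge n$. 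At the end of the play, $\varphi\bigl(\bigcup_n\bigcup I_n\bigr)\ge\sup_n\varphi(\bigcup I_n)=\infty$, so $\bigcup_n\bigcup I_n\notin \Id$ and $\bigcup_n I_n$ is unbounded in $\Id$; Player II wins.

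The main obstacle is locating the correct invariant. The seemingly natural ``$\bigcup U_n\notin \Id$'' is not preserved: Player I can split a witness sequence like $\{\{k\}:k<\omega\}$ into singletons, each individually bounded in $\Id$. The strengthening to ``not $\sigma$-bounded'' is exactly what combines a transparent inheritance through countable covers with enough submeasure mass to produce the required witnesses $I_n$.
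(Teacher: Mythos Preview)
Your proof is correct and follows essentially the same approach as the paper: the invariant you call ``not $\sigma$-bounded in $\Id$'' is precisely what the paper phrases as ``not countably generated,'' and your preservation argument is the same pigeonhole on countable covers. The paper's proof is terser---it simply asserts the existence of $I_n\in[U_n]^{<\omega}$ with $\varphi(\bigcup I_n)>n$ without justification---whereas you supply the details via lower semicontinuity of $\varphi$ and the observation that $\bigcup U_n\notin\Id$; this added explanation is a genuine improvement in clarity but not a different method.
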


\begin{proof} Let $\varphi$ be a lsc submeasure such that $\mathcal I={\rm Fin}(\varphi)$. A winning strategy for Player II can be described as follows. 
Start with $U_0=\mathcal I$. Once $\mathcal{U}_{n-1}$ has been played, as $\Id$ is not countably generated, choose $U_n\in\mathcal{U}_{n-1}$ which is not countably generated, and $I_n\in[U_n]^{<\omega}$ such that $\varphi(\bigcup I_n)>n$.
\end{proof}

\begin{lemma} Player I has a winning strategy in $\Gamma_2(\mathcal J)$ for every  $G_\delta$ ideal $\mathcal J$ of compact sets of  some Polish space $X$.
\end{lemma}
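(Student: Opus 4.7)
The plan is to exploit completeness of the Hausdorff metric on $K(X)$ together with the $G_\delta$ decomposition of $\mathcal J$. I would fix a compatible complete metric $d$ on $X$, let $d_H$ denote the induced complete Hausdorff metric on $K(X)$, write $\mathcal J = \bigcap_n V_n$ with each $V_n$ open in $K(X)$ and $V_{n+1}\subseteq V_n$, and fix a countable dense set $Z\subseteq K(X)$.

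At step $n$, given $U_{n-1}$, Player I would play the countable family
\[
\mathcal U_n = \big\{\, U_{n-1} \cap B_H(z,r) \,:\, z\in Z,\ r\in\mathbb Q\cap(0,2^{-n}),\ \overline{B_H(z,r)}\subseteq V_n \,\big\},
\]
where $B_H(z,r)$ denotes the open Hausdorff $r$-ball around $z$. That $\mathcal U_n$ covers $U_{n-1}$ is a routine triangle-inequality check: for $L\in U_{n-1}\subseteq V_n$, pick $\varepsilon\in(0,2^{-n})$ with $B_H(L,\varepsilon)\subseteq V_n$, choose $z\in Z$ within $\varepsilon/4$ of $L$, and pick a rational $r\in(\varepsilon/4,3\varepsilon/4)$; then $L\in B_H(z,r)$ and $\overline{B_H(z,r)}\subseteq B_H(L,\varepsilon)\subseteq V_n$.

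Player II's response then has the form $U_n = U_{n-1}\cap B_H(z_n,r_n)$ with $r_n<2^{-n}$ and $\overline{B_H(z_n,r_n)}\subseteq V_n$, together with a finite set $I_n\subseteq U_n$. Since $z_{n+1}\in U_{n+1}\subseteq B_H(z_n,r_n)$, the centers $(z_n)$ form a Cauchy sequence in $(K(X),d_H)$ and, by completeness, converge to some $L^*\in K(X)$. For each fixed $n$ and every $m\ge n$ the iterate $z_m$ lies in $\overline{B_H(z_n,r_n)}\subseteq V_n$, so the limit also lies in $V_n$, and hence $L^*\in\bigcap_n V_n=\mathcal J$. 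A further triangle inequality gives $d_H(L,L^*)<3\cdot 2^{-n}$ for every $L\in I_n$, placing such $L$ inside the $3\cdot 2^{-n}$-neighborhood of $L^*$.

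Finally I would set $K^*:=L^*\cup\bigcup_n\bigcup I_n$ and verify compactness of $K^*$ by a case analysis on a sequence in $K^*$: any point outside the compact set $L^*\cup\bigcup_{n\le N}\bigcup I_n$ must lie within $3\cdot 2^{-N}$ of $L^*$, so either a subsequence stays in some compact $L^*\cup\bigcup_{n\le N}\bigcup I_n$ or it has distance to $L^*$ tending to $0$ and thus a subsequence converging to a point of $L^*$. Thus $K^*$ is a countable union of members of $\mathcal J$ whose total is compact, so by the $\sigma$-ideal property of $\mathcal J$ we have $K^*\in\mathcal J$; and since every $L\in I_n$ satisfies $L\subseteq K^*$, the family $\bigcup_n I_n$ is bounded in $\mathcal J$, and Player I wins. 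The main obstacle is arranging that the limit $L^*$ actually lies in $\mathcal J$, and this is precisely what the condition $\overline{B_H(z,r)}\subseteq V_n$ built into Player I's cover buys, by trapping the tail $(z_m)_{m\ge n}$ and therefore its limit inside the open set $V_n$ for every $n$ simultaneously.
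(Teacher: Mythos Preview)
Your approach is essentially the paper's: trap a Cauchy sequence inside $\bigcap_n V_n$ via shrinking Hausdorff balls, then use that a $G_\delta$ ideal of compact sets is automatically a $\sigma$-ideal (Kechris) to bound $\bigcup_n I_n$ by the limit together with the finite unions. However, there is a slip in the execution. You assert ``$z_{n+1}\in U_{n+1}$'', but the centers $z_n$ are drawn from a fixed countable dense set $Z\subseteq K(X)$ and there is no reason for them to lie in $\mathcal J$, let alone in the nested sets $U_n=U_{n-1}\cap B_H(z_n,r_n)$. Consequently your next claim, that $z_m\in\overline{B_H(z_n,r_n)}$ for all $m\geq n$, is unjustified and in general false: a triangle-inequality chain only gives $d_H(z_m,z_n)<\sum_{k\geq n}(r_k+r_{k+1})$, which may well exceed $r_n$.

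The repair is immediate: track any $L_n\in U_n$ instead of the centers (if some $U_n=\emptyset$, Player II is forced to play $I_m=\emptyset$ for all $m\geq n$ and Player I wins trivially). Since $U_m\subseteq U_n\subseteq B_H(z_n,r_n)$ for $m\geq n$, one gets $d_H(L_m,L_n)<2r_n<2^{1-n}$, so $(L_n)$ is $d_H$-Cauchy with limit $L^*\in\overline{B_H(z_n,r_n)}\subseteq V_n$ for every $n$; the rest of your argument, including the compactness of $K^*$ and the $\sigma$-ideal step, then goes through unchanged. The paper avoids this issue altogether by centering its balls at points of $\mathcal J$ and invoking the Lindel\"of property of $K(X)$ to extract a countable subcover, so that the chosen center at stage $n+1$ genuinely lies in the stage-$n$ ball.
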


\begin{proof} Let $\delta$ be the Hausdorff  metric on $\mathcal K(X)$.  Since  $\mathcal J$ is $G_\delta$, let $\{ F_n: n\in \omega\}$ be a sequences of  closed subset of $\mathcal K(X)$  such that $\mathcal K(X)\setminus \mathcal J=\bigcup_{n\in\omega} F_n$.

We can describe a winning  strategy  for Player I as follows. At step $0$,  for each  $A\in \mathcal J$, choose
$\epsilon_A>0$ such that $B_{ \delta }( A,\epsilon_A)\cap F_0=\emptyset$.
Since $(\mathcal K(X),\delta)$ is Lindel\"{o}f, there exists a countable subcover
 $\{ B_\delta(A^1_i, \epsilon_{A^1_i}): i\in \omega\}$ of  $\mathcal J$.
Player I plays $\mathcal U_0=\{ B_\delta(A^1_i, \epsilon_{A^1_i}): i\in \omega\}$.
At step $n+1$, suppose  Player II plays $B_{\delta}(A^n_{i_n},\epsilon_{A^n_{i_n}})$ and  a finite subset $I_n\subseteq  B_{\delta}(A^n_{i_n},\epsilon_{A^n_{i_n}})\cap \mathcal J $. As in  step $1$,  for each  $A\in  B_{\delta}(A^n_{i_n},\epsilon_{A^n_{i_n}})\cap \mathcal J$, there is an $\epsilon_A\in (0, \frac {1}{n+1})$ such that $B_{ \delta }( A,\epsilon_A)\cap F_n=\emptyset$.  Player I plays a countable subcover  $\mathcal U_{n+1}=\{ B_\delta(A^{n+1}_i, \epsilon_{A^{n+1}_i}): i\in \omega\}$ of  $B_{\delta}(A^n_{i_n},\epsilon_{A^n_{i_n}})\cap \mathcal J$.

To see that the strategy is winning, note that the sequence $\{ A^n_{i_n}: n\in\omega\}$ is a Cauchy sequence in $\mathcal K(X)$ hence converges to some $A\in \mathcal K(X)$. By the construction, $A\not \in \bigcup_{n\in\omega} F_n$ so, $A\in\mathcal J$. As each $I_n$ is a finite subset of  $B_{\delta}(A^n_{i_n},\epsilon_{A^n_{i_n}})$, the sequence $\bigcup_{n\in\omega} I_n$ also converges to $A$, and as, 
by \cite[Theorem~3]{Kec}, $\mathcal J$ is a $\sigma$-ideal of compact sets, 
we get $A\cup \bigcup_{n\in\omega} \bigcup I_n \in \mathcal J$. (Theorem 3 in \cite{Kec} is stated only for $G_\delta$ ideals of compact subsets of compact metric spaces, but its proof works for $G_\delta$ ideals of compact subsets of arbitrary Polish spaces.)
\end{proof}

For more information on ideals of compact sets, the reader may consult \cite{KLW,MZ,matrai-ken, moore-solecki, ST04}.

\section{Flat ideals}\label{FGF1}

In this and subsequent sections, we restrict our attention to a broad subclass of $F_\sigma$ ideals that is defined in terms of submeasures.
A lsc submeasure $\varphi$ is {\bf flat} if, for each $M>0$, there exists $N>0$ such that, for each finite set
$a\in {\mathcal P}(\omega)$ with $\varphi(a)<N$, we have
\[
\exists j \ \forall b\in {\mathcal P}(\omega\setminus j) \
\Big(\varphi(b)< M\Rightarrow \varphi(a\cup b)< N\Big).
\]
We will sometimes say that $N$ witnesses flatness of $\varphi$ for $M$. 
An ideal is called {\bf flat} if it is of the form ${\rm Fin}(\varphi)$ for a flat submeasure $\varphi$.
We point out that fragmented ideals\footnote{ An $F_{\sigma}$ ideal $\Id={\rm Fin}(\varphi)$ is {\em fragmented} if there is a partition $(a_j)$ of $\omega$ into finite sets such that $\lim_j \varphi(a_j)=\infty$ and $\Id=\{b\subseteq \omega:\exists k\, \forall j\; \varphi(a_j\cap b)<k\}$.}, as introduced in \cite{HRZ}, are clearly flat.
Note that if $\Id={\rm Fin}(\varphi)= {\rm Fin}(\psi)$ and  $\varphi$  is flat, it  does not follow  that $\psi$ is flat. For example,
\[
[\omega]^{<\omega}={\rm Fin}(\varphi)={\rm Fin}(\psi), \hbox{ where }\varphi(A)=\sup A \hbox{ and }\psi(A)=|A|.
\]
Both $\varphi$ and $\psi$ are lsc submesures, $\varphi$ is flat, while $\psi$ is not.

It was proved in \cite{ST04} that both the measure leaf and the category leaf, that is, all analytic P-ideals of subsets of $\omega$ and all $G_\delta$ ideals of compact subsets of Polish spaces,
are included in a general class of partial
orders, called {\bf basic} partial orders. (This notion is defined below.) As shown in \cite{ST04}, a number of arguments related to Tukey reductions can be run for general basic orders. Here, however, we prove that,
unless a flat ideal is countably generated (so very simple) it is not basic. This result, along with Theorem~\ref{thm9}, highlights ``orthogonality" of the partial orders
considered in this paper with the previously studied classes of analytic P-ideals and $G_\delta$ ideals of compact sets.

In order to state our result, we recall the definition of basic orders from \cite[Section~3]{ST04}.
This definition involves a topology on a partial order; in the two cases mentioned above, of analytic P-ideals of subsets of $\omega$ 
and $G_\delta$ ideals of compact subsets of Polish spaces, the topologies
making the orders into basic orders are
the submeasure topology and the Vietoris topology, respectively; see \cite[Section~3]{ST04} for details.
A partial order $(P, \leq)$ with a metric separable topology on $P$ is {\bf basic} provided that
\begin{enumerate}
\item[---] any pair of elements of $P$ has the least upper bound with respect to $\leq$ and the binary operation of taking the least upper bound is continuous;

\item[---] each bounded sequence has a convergent subsequence;

\item[---] each convergent sequence has a bounded subsequence.
\end{enumerate}

A result analogous to Theorem~\ref{P:nonb} below was proved by Matr{\'a}i in \cite[Proposition~5.28]{Ma} for a specific family of ideals. Our theorem generalizes M{\'a}trai's
result; our proof expands on his approach. On the other hand, Theorem~\ref{P:nonb} strengthens, within the class of flat ideals, the general result
\cite[Corollary~4.2]{ST04}.

\begin{theorem}\label{P:nonb}
Let $\Id$ be a flat ideal. Then either $\Id$ is countably generated or $\Id$ is not basic with any topology on $\Id$.
\end{theorem}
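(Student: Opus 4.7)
The plan is to argue by contradiction, adapting the approach of M\'atrai \cite[Proposition~5.28]{Ma}. Suppose $\Id = {\rm Fin}(\varphi)$ is a flat ideal (with flat submeasure $\varphi$), is basic with some metric separable topology $\tau$ on $\Id$, yet is not countably generated. My goal is to show that Player~I has a winning strategy in the Fremlin game $\Gamma_2(\Id)$; this contradicts the Section~2 lemma showing that Player~II has a winning strategy in $\Gamma_2(\Id)$ whenever $\Id$ is uncountably generated, and thereby forces $\Id$ to be countably generated.

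To describe Player~I's strategy, fix a compatible metric $d$ for $\tau$ and a countable $\tau$-dense $D \subseteq \Id$. Player~I plays nested countable covers of $U_{n-1}$ by $d$-open balls of rapidly decreasing radius $r_n$ centered at elements of $D$. The nesting forces Player~II's sequence $(d_n)$ of chosen centers to be $d$-Cauchy; invoking continuity of $\cup$ together with the basic axioms that bounded sequences have convergent subsequences and convergent sequences have bounded subsequences, I expect to lift this to $\tau$-convergence of $(d_n)$ to some $a^* \in \Id$, analogously to the argument for $G_\delta$ ideals of compact subsets of Polish spaces given in Section~2. The winning condition for Player~I is that $\bigcup_n \bigcup I_n$ is bounded in $\Id$. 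Flatness enters to force this: at each step Player~I couples the metric refinement with a flatness-witness pair $(M_n, N(M_n))$, restricting her cover to elements of $\Id$ with $\varphi$-value below $N(M_n)$. By the defining property of flatness, once the $\omega$-supports of the finite sets $I_n$ concentrate at sufficiently late initial segments of $\omega$, they can be absorbed into a uniform $\varphi$-ball $\{a : \varphi(a) \leq N\}$, so that the total union has bounded $\varphi$-value and lies in $\Id$.

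The principal obstacle is that $\tau$-convergence has no intrinsic link to the combinatorial structure on $\omega$ that flatness refers to: a priori, elements $\tau$-close to a common center $d_n$ need not have their $\omega$-supports beyond any fixed initial segment. I anticipate the resolution to be a diagonalization that jointly coordinates the $d$-radii $r_n$, the flatness witnesses $N(M_n)$, and the initial segments $j_n$ supplied by applying flatness to the accumulated finite set $\bigcup_{i<n}\bigcup I_i$, so that the nested refinements drive Player~II into $d$-small neighborhoods of $D$-elements whose $\omega$-supports already lie in a prescribed tail of $\omega$. This coordinated diagonalization is the technical heart of the proof and the place where flatness is used essentially, rather than mere $F_\sigma$-ness, and is what should yield a result stronger than the general \cite[Corollary~4.2]{ST04}. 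It also mirrors, in the more abstract flat setting, the block-by-block argument of M\'atrai in the fragmented case.
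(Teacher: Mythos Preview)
Your route through $\Gamma_2$ has a real gap at exactly the point you flag as the ``principal obstacle,'' and the sketched resolution does not close it. The basic topology $\tau$ is abstract and carries no information about tails of $\omega$, which is where flatness lives. Even if you interleave $\tau$-ball covers with combinatorial covers fixing initial segments $a\cap[0,m_n]=s_n$, Player~II chooses $I_n\subseteq U_n$ of arbitrary finite size \emph{after} $m_n$ is set. Each element of $I_n$ is then $s_n\cup b$ with $b\subseteq(m_n,\infty)$ and $\varphi(b)<M$, but there are $|I_n|$ such $b$'s, all supported in the \emph{same} tail $(m_n,\infty)$. Flatness lets you absorb one such $b$ at a time into a set of level $<N$, with the required tail threshold advancing after each absorption; absorbing arbitrarily many $b$'s from a common tail is precisely what graduality, not mere flatness, provides. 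So your ``coordinated diagonalization'' cannot be completed from flatness alone. Separately, your Cauchy-to-convergent step for the centers $d_n$ presumes $(\Id,d)$ is complete, which is not among the basic axioms, and in any case convergence of the centers does nothing to bound the adversarially chosen $I_n$'s.

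The paper does not go through $\Gamma_2$ at all; it argues directly against the basic axioms. If $\Id$ is not countably generated, one finds $x_0$ with $\sup_{n\in x_0}\varphi(\{n\})=M_0<\infty$ but $\varphi(x_0)=\infty$, and for each $M>2M_0$ a block sequence $(a_i^M)_i$ of finite subsets of $x_0$ with $M-M_0\leq\varphi(a_i^M)<M$. Flatness is used once, to bound $\varphi\big(\bigcup_i a_i^M\big)\leq N_M$, so the tails $x_k^M=\bigcup_{i\geq k}a_i^M$ lie in $\Id$ and are bounded by $x_0^M$. By basicness $(x_k^M)_k$ has a $\tau$-convergent subsequence; since each $\mathcal{P}(x_k^M)$ is $\tau$-closed (continuity of $\cup$), the limit is $\emptyset$. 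Diagonalizing over $M$ yields $y_M\to\emptyset$ with $\varphi(y_M)\geq M-M_0$, so no subsequence of $(y_M)_M$ is bounded---contradicting the third basic axiom.
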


\begin{proof} Let $\varphi$ be a flat submeasure with $\Id = {\rm Fin}(\phi)$. If
\begin{equation}\label{E:suto}
\forall x\in {\mathcal P}(\omega)\ \big(\sup_{n\in x} \phi(\{ n\}) <\infty \Rightarrow \phi(x)<\infty\big),
\end{equation}
then $\Id$ is generated by the sets
\[
\{ n\in \omega \colon \varphi(\{ n\}) <M\} \in \Id
\]
with $M\in\omega$. Thus, $\Id$ is countably generated.

Assume, therefore, that \eqref{E:suto} fails, which allows us to fix $x_0\in {\mathcal P}(\omega)$ such that
\begin{equation}\label{E:suin}
\sup_{n\in x_0}\varphi(\{ n\}) <\infty \;\hbox{ and }\;   \varphi(x_0)=\infty.
\end{equation}
Set
\begin{equation}\label{E:mze}
M_0=\sup_{n\in x_0}\varphi(\{ n\}).
\end{equation}
Using \eqref{E:suin}, the definition \eqref{E:mze}, and the semicontinuity of $\varphi$,
we can find,
for each $M>2M_0$,  a sequence $(a^M_i)_{i\in\omega}$ of finite subsets of $x_0$ such that, for all $i$,
\begin{equation}\label{E:ama}
\max a^M_i < \min a^M_{i+1}\;\hbox{ and }\; M-M_0\leq \varphi(a^M_i)<M.
\end{equation}
By flatness of $\varphi$, we can find $N_M>0$ and a subsequence $(a^M_{i_j})$ of $(a^M_i)$ such that, for each $t\in \omega$,
$\varphi\big(\bigcup_{j\leq t} a^M_{i_j}\big) < N_M$, from which, by semicontinuity of $\varphi$, we get
\begin{equation}\label{E:mmm}
\varphi\big(\bigcup_j a^M_{i_j}\big) \leq N_M.
\end{equation}
For ease of notation, we assume that the whole sequence $(a^M_i)$ has property \eqref{E:mmm}.

Towards a contradiction, suppose that there is a topology $\tau$ on $\Id$ that makes $\Id$ into a basic partial order.
For $k\in \omega$, let
\[
x^M_k= \bigcup_{i\geq k} a^M_{i}
\]
Note that, by \eqref{E:mmm}, we have that $x^M_k\in \Id$; moreover, the sequence $(x^M_k)$ is bounded in $\Id$ by $x^M_0$. Thus,
it has a subsequence $(x^M_{k_n})_n$ that is convergent with respect to $\tau$. By the fact that $\tau$ is basic, for each $k$, the set ${\mathcal P}(x_k^M)$ is
a $\tau$-compact subset of $\Id$. It follows that the limit of $(x^M_{k_n})_n$ is an element of ${\mathcal P}(x_k^M)$ for each $k$; thus,
\[
x_{k_n}^M \to \emptyset\hbox{ with respect to }\tau, \hbox{ as }n\to\infty
\]
since $\bigcap_k {\mathcal P}(x_k^M) =\{\emptyset\}$.
Since $\tau$ is a metric topology, we can find a diagonal sequence $(y_M)_{M\in\omega}$ convergent to $\emptyset$, that is, for each $M$, there exists
$n$ with $y_M= x_{k_n}^M$ and $y_M\to\emptyset$ with respect to $\tau$ as $M\to\infty$. Again, by using the fact that $\Id$ is basic with $\tau$, the convergent sequence
$(y_M)_M$ has a bounded subsequence. But this is impossible, since by the second part of \eqref{E:ama} and the definitions of $x^M_k$ and $y_M$,
we have $\phi(y_M)\geq M-M_0$ for each $M$.
\end{proof}

\section{A dichotomy for flat ideals}\label{FGF}

We prove a dichotomy theorem that is the starting point of our investigation of flat ideals. A result of this form was proved
in \cite{HRZ} for fragmented ideals. Here we extend it to flat ideals building on the proof from \cite{HRZ}.
The theorem asserts that flat ideals that are not gradually flat are of the highest possible cofinal type. Note that the ideal of subsets of $\omega$ from
\cite[Section 5, Proposition 3]{LV} that is Tukey equivalent to $[\mathfrak c]^{<\omega}$ is easily seen to be flat.
In the next section, in Theorem \ref{thm8}, we illuminate the second half of this dichotomy by giving several conditions on flat ideals equivalent to gradual flatness.

A lsc submeasure $\varphi$ is said to be {\bf gradual} if, for each $M>0$, there exists $N>0$ such that, for each $l$, we have
\[
\exists j \ \forall B\in [\mathcal P(\omega\setminus j)]^{\leq l}\, \Big(
  \max_{b\in B}\varphi(b)< M\Rightarrow \varphi(\bigcup B)< N\Big).
\]
We call a lsc submeasure  $\varphi$  {\bf gradually flat} if it is both flat and gradual, and we say that an ideal $\mathcal I$ is {\bf gradually flat} if $\mathcal I={\rm Fin}(\varphi)$ for some  gradually flat lsc submeasure $\varphi$.

\begin{lemma}
If $\varphi,\psi $ are flat  and $\Id={\rm Fin}(\varphi)={\rm Fin}(\psi)$, then  $\varphi$ is
gradually flat if and only if $\psi$ is gradually flat.
\end{lemma}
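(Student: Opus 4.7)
Since both $\varphi$ and $\psi$ are flat by assumption, and the statement is symmetric in the two submeasures, it suffices to show that if $\varphi$ is additionally gradual, then $\psi$ is gradual. The plan is to first isolate a tail-comparison lemma between $\varphi$ and $\psi$, and then sandwich the gradual property of $\varphi$ between two applications of that lemma to deduce the gradual property of $\psi$.

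The key tail-comparison lemma reads as follows: if $\psi$ is a flat lsc submeasure and $\varphi$ is any lsc submeasure with $\mathrm{Fin}(\varphi)=\mathrm{Fin}(\psi)$, then for every $M>0$ there exist $M'>0$ and $j\in\omega$ such that $\varphi(b)<M'$ whenever $b\subseteq\omega\setminus j$ and $\psi(b)<M$. I would prove this by contradiction. A failure produces, after using lower semicontinuity of $\varphi$ to pass to finite approximations, a sequence of finite sets $b_n$ with $\min b_n\to\infty$, $\psi(b_n)<M$, and $\varphi(b_n)\to\infty$; pass to a subsequence with $\max b_{n_k}<\min b_{n_{k+1}}$. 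Taking $N$ from the flatness of $\psi$ applied to $M$, one inductively uses flatness (first with $a=\emptyset$ to absorb the $M$-versus-$N$ mismatch, then with $a$ equal to the running union of previously selected $b_{n_i}$'s) to keep every finite union $b_{n_0}\cup\dots\cup b_{n_k}$ of $\psi$-value below $N$. Lower semicontinuity of $\psi$ then yields $\psi\bigl(\bigcup_k b_{n_k}\bigr)\leq N$, so the full union lies in $\mathrm{Fin}(\psi)=\mathrm{Fin}(\varphi)$; but this contradicts $\varphi\bigl(\bigcup_k b_{n_k}\bigr)\geq\varphi(b_{n_k})\to\infty$.

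With the comparison lemma in hand, fix $M>0$ and proceed in three steps. First, apply the lemma (using flatness of $\psi$) at $M$ to get $M'$ and $j_1$. Second, invoke the gradual property of $\varphi$ at $M'$ to obtain $N'$ such that for every $l$ there is $j_2(l)\in\omega$ for which families $B\in[\mathcal P(\omega\setminus j_2(l))]^{\leq l}$ with $\max_{b\in B}\varphi(b)<M'$ satisfy $\varphi\bigl(\bigcup B\bigr)<N'$. Third, apply the comparison lemma with the roles of $\varphi$ and $\psi$ swapped (using flatness of $\varphi$) at $N'$ to obtain $N$ and $j_3$. Then $N$ witnesses gradual for $\psi$ at $M$: given $l$, take $j=\max\{j_1,j_2(l),j_3\}$; any family $B\in[\mathcal P(\omega\setminus j)]^{\leq l}$ with $\max_{b\in B}\psi(b)<M$ satisfies $\max_{b\in B}\varphi(b)<M'$ by step~1, hence $\varphi\bigl(\bigcup B\bigr)<N'$ by step~2, hence $\psi\bigl(\bigcup B\bigr)<N$ by step~3.

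The main obstacle is the comparison lemma, where the delicate point is the iterated absorption argument: one must combine lower semicontinuity of $\psi$ with the flatness condition to unite infinitely many finite sets of bounded $\psi$-mass into a single set still lying in $\mathrm{Fin}(\psi)$, which then forces finiteness of $\varphi$ there and contradicts the construction. Once the lemma is in place, the rest of the deduction is a routine quantifier chase through the three steps above.
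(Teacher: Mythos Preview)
Your proof is correct and follows the same strategy as the paper's: establish a comparison between the $\varphi$- and $\psi$-levels via a flatness-based absorption argument, then push the gradual property through it. The only difference is that the paper proves a \emph{global} comparison (valid for all $b\in\mathcal P(\omega)$, not just tails), which costs an extra compactness step---pass to a convergent subsequence $b_n\to b$ and work with $b_n\setminus b$ to force $\min\to\infty$---whereas your tail version gets this for free and still suffices since the definition of gradual already quantifies over tails.
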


\begin{proof}
First, we prove that, for each $K_1>0$, there exists $K_2>0$ such that, for each $b\in {\mathcal P}(\omega)$,
\begin{equation}\label{E:bb}
\varphi(b)\leq K_1\Rightarrow \psi(b)\leq K_2\;\hbox{ and }\;\psi(b)\leq K_1\Rightarrow \varphi(b)\leq K_2.
\end{equation}
Otherwise, there exists $K>0$ such that, for each $n\in\omega$, there exists a set $b_n\in{\mathcal P}(\omega)$ with 
either  $\varphi(b_n)\leq K $  and  $\psi(b_n)> n$, or
 $\psi(b_n)\leq K  $  and  $\varphi(b_n)> n$.  
By pigeonhole principle and  compactness of ${\mathcal P}(\omega)$, 
we can assume that 
\begin{equation}\label{E:in1}
\psi(b_n)\leq K,\; \varphi(b_n)> n,\hbox{ for all }n, \hbox{ and }(b_n)_{n} \hbox{ converges to }b\in {\mathcal P}(\omega).
\end{equation}
Since $\psi(b_n)\leq K$ for all $n$, by  lower semicontinuous of $\psi$, we have $\psi(b)\leq K$, 
from which, by ${\mathcal I} ={\rm Fin}(\varphi)={\rm Fin}(\psi)$,  we get 
\begin{equation}\label{E:in2}
\varphi(b)<\infty.
\end{equation}
Let  $N$ witness flatness of $\psi$ for $K+1$.
Using \eqref{E:in1}, \eqref{E:in2}, and lower semicontinuity of $\varphi$, one recursively constructs finite sets 
$c_n\in \mathcal P(b_{j_n}\setminus b)$, $n\in \omega$, for some increasing sequence $(j_n)_n$, so that
\[
\psi(\bigcup_{i\leq n}c_i)< N\hbox{ and }\varphi(c_n)> n,\hbox{ for all }n\in \omega.
\]
The above inequalities give $\psi(\bigcup_{n\in \omega}  c_n )\leq N$ and $\varphi(\bigcup_{n\in \omega }c_n)=\infty$, contradicting 
${\rm Fin}(\varphi)={\rm Fin}(\psi)$, and therefore proving \eqref{E:bb}.

Assume $\psi$ is not gradual. This assumption allows us to fix
$M>0$, for which the following condition holds. For each $N>0$, we can find $l_N$ and a sequence 
$(B^N_n)_n$ of finite families of subsets of $\omega$ so that: 
$|B^N_n|\leq l_N$, $\min \{ \min b\colon b\in B^N_n\} \to \infty$ as $n\to \infty$,
$\max_{b\in B^N_n}\psi(b)< M$, and $\psi(\bigcup B^N_n)\geq N$.
It follows now from the second part of \eqref{E:bb} that there exists $M'>0$ such that
$\max_{b\in B^N_n}\varphi(b)< M'$ for all $n, N$. From the first part of \eqref{E:bb}, it follows that there exists
a function $g$ with $g(N)\to \infty$ as $N\to \infty$ such that $\varphi(\bigcup B^N_n)> g(N)$, for all $n, N$. 
Thus, $\varphi$ is not gradual, and the lemma is proved. 
\end{proof}

The following result is the promised dichotomy theorem.  Recall that if 
a directed partial order contains a strongly unbounded set of size $\mathfrak c$, then it 
is Tukey equivalent to  $[\mathfrak c]^{<\omega}$. 
Here, a subset $X$ of a partial order is {\bf strongly unbounded} if every infinite subset of $X$ is unbounded.

\begin{theorem}\label{dichotomy} Let $\Id$ be a flat ideal. Then either
$\Id \equiv_T [\mathfrak c]^{<\omega}$ or  $\Id$ is gradually flat.
\end{theorem}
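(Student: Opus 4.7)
Since $|\Id|\leq\mathfrak{c}$, the reduction $\Id\leq_T [\mathfrak{c}]^{<\omega}$ is automatic, so the task reduces to showing that if $\Id$ is flat but not gradually flat then $\Id$ contains a strongly unbounded family of size $\mathfrak{c}$, which yields $[\mathfrak{c}]^{<\omega}\leq_T \Id$. By the preceding lemma, gradual flatness is a property of the ideal rather than of the chosen submeasure, so we may fix a flat submeasure $\varphi$ with $\Id={\rm Fin}(\varphi)$ which is not gradual. Let $M>0$ witness non-graduality and let $N_0$ witness flatness of $\varphi$ for $M$.

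The plan is to construct the family in the form $x_\alpha=\bigcup_{n\in\omega}c_{n,k_n(\alpha)}$ for $\alpha\in 2^\omega$, with pieces coming from blocks extracted via failure of graduality. We recursively select pairwise disjoint blocks $\mathcal{F}_n=\{c_{n,0},\dots,c_{n,l_n-1}\}$ of pairwise disjoint finite subsets of $\omega$ with $\varphi(c_{n,k})<M$ for every $k$ and $\varphi(\bigcup_{k<l_n} c_{n,k})\geq N_n$ for a pre-chosen sequence $N_n\to\infty$. Each $\mathcal{F}_n$ is placed far enough to the right that, for every one of the finitely many possible cumulative unions of one piece from each of $\mathcal{F}_0,\dots,\mathcal{F}_{n-1}$, the flatness witness $j$ for that set lies to the left of $\mathcal{F}_n$. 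Then regardless of the choices $k_n(\alpha)\in\{0,\dots,l_n-1\}$, iterated flatness yields $\varphi(x_\alpha)\leq N_0$, placing each $x_\alpha$ in $\Id$.

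The heart of the proof is the choice of the indexing $\alpha\mapsto(k_n(\alpha))_n$ that forces strong unboundedness. Using the subadditivity estimate
\[
\varphi\Big(\bigcup_{k\in K}c_{n,k}\Big)\geq N_n - M\big(l_n-|K|\big),
\]
it suffices to arrange that for every infinite $A\subseteq 2^\omega$ the image $\{k_n(\alpha):\alpha\in A\}$ omits at most $N_n/(2M)$ elements of $\{0,\dots,l_n-1\}$ for infinitely many $n$; then $\varphi(\bigcup_{\alpha\in A}x_\alpha)\geq N_n/2$ for such $n$, and hence the value is infinite. Since subadditivity forces $l_n\geq N_n/M$, the required omission bound of $N_n/(2M)$ is roughly $l_n/2$, which provides slack. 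We design the indexing by a tree-coding of $2^\omega$ into the block indices, exploiting the elementary fact that every infinite $A\subseteq 2^\omega$ has $|\{\alpha\restriction n:\alpha\in A\}|\to\infty$ as $n\to\infty$, and calibrating $N_n$ and $l_n$ tightly enough that this branching growth outpaces the required coverage.

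The main obstacle is this last combinatorial step. In the fragmented case of \cite{HRZ} the partition into fragments provides a ready-made skeleton for the indexing; in the present setting that skeleton must be built from the blocks $\mathcal{F}_n$ themselves, threading carefully between the forced lower bound $l_n\geq N_n/M$ and the worst-case thin-branching behaviour of countable subsets of $2^\omega$.
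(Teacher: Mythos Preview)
Your setup is sound: fixing $M$ witnessing the failure of graduality, using flatness to keep each $x_\alpha$ in $\Id$, and aiming at a strongly unbounded family of size $\mathfrak{c}$ is exactly the right shape. The gap is precisely where you locate it --- the combinatorial indexing --- and your heuristic for why a tree-coding should work is mistaken. From $l_n\geq N_n/M$ you conclude that the allowed omission $N_n/(2M)$ is ``roughly $l_n/2$'', but that inequality only gives $N_n/(2M)\leq l_n/2$; nothing bounds $l_n M/N_n$ from above. The pieces $c_{n,k}$ may all have $\varphi(c_{n,k})$ minute compared with $M$, forcing $l_n$ enormous relative to $N_n/M$. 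In that regime your subadditivity lower bound $\varphi(\bigcup_{k\in K}c_{n,k})\geq N_n - M(l_n-|K|)$ says nothing unless $k_n[A]$ misses only $O(N_n/M)$ indices out of $l_n$, i.e.\ covers essentially all of them. A tree-coding cannot guarantee that for every infinite $A\subseteq 2^\omega$: take $A$ a convergent sequence, so $|\{\alpha\!\restriction\! s:\alpha\in A\}|$ grows as slowly as you like, and any $k_n$ factoring through a finite restriction has $|k_n[A]|$ bounded by that count.

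The paper sidesteps this entirely by changing what ``coverage'' means. Instead of asking that the image of an infinite set cover most of a block, it arranges that any $l_N$ \emph{distinct} indices cover an \emph{entire} block at some coordinate. Concretely: one keeps a separate layer of blocks $B^N_n$ for every $N$, indexes the family by a perfect set of eventually different $g\in\omega^\omega$ (so any $l_N$ of them are simultaneously distinct at some $k$), and invokes the combinatorial lemma \cite[Claim~2.5]{HRZ}: there exist maps $p\mapsto f^{k,N}_p$ with $f^{k,N}_p(n)\in B^N_n$ such that for every $a\in[\omega]^{l_N}$ there is $n$ with $\{f^{k,N}_p(n):p\in a\}=B^N_n$. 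With $G(g)=\bigcup_{k}\bigcup_N\bigcup_n f^{k,N}_{g(k)}(n)$, any $l_N$ distinct $g$'s already give $\varphi(\bigcup_i G(g_i))\geq N$, while a single flatness witness $K_M$ for $M$ keeps every $G(g)$ in $\Id$. The point is that strong unboundedness is obtained from exact coverage by \emph{finite} tuples, not from asymptotic coverage by infinite sets; this is the idea your sketch is missing.
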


\begin{proof} Let $\Id = {\rm Fin}(\varphi)$ for a flat submeasure $\varphi$, and assume that $\varphi$ is not gradual. This assumption allows us to fix
$M>0$ such that for each $N$ there exists $\ell_N$ and a sequence $(B^N_n)$ of families of subsets of $\omega$ such that, for all $n, N\in\omega$,
\begin{equation}\label{E:BB}
|B^N_n| \leq \ell_N,\; n\leq \min_{b\in B^N_n} b,\; \max_{b\in B^N_n}\varphi(b)<M,\hbox{ and } \varphi(\bigcup B^N_n)\geq N.
\end{equation}
By semicontinuity of $\varphi$, we can assume that the sets in $B^N_n$ are finite for each $n, N$. By going to subsequences of 
$(B^N_n)$, we can assume that, for each $n$,
\[
|B^N_n| = \ell_N
\]
and
\[
(n,N)\not= (n', N')\Rightarrow \left(\max \bigcup B^N_n<\min \bigcup B^{N'}_{n'}\,\hbox{ or }\,\max \bigcup B^{N'}_{n'}<\min \bigcup B^N_n\right).
\]
Let $K_M$ witness flatness of $\varphi$ for $M$. By the choice of $K_M$ and by \eqref{E:BB},
we see that, for each $p\in \omega$, the following statement holds:
for each $N$, for large enough $n$, if
$a\subseteq p$ is such that $\varphi(a)<K_M$ and $b\in B^N_n$, then $ \varphi(a\cup b) <K_M$.
Using this observation and the inequality $\varphi(\emptyset)=0<K_M$,
a recursive construction allows us to pick, for each $N$,  an infinite subset $D^N$ of $\omega$ with the following property: for each finite sequence
$(b_i)_{i<t}$, with $t\in\omega$, of sets picked from distinct $B^N_n$ with $N\in \omega$ and $n\in D^N$, we have
\[
\varphi( \bigcup_{i<t} b_i)<K_M.
\]
Thus, by going to subsequences,
we can assume that the above inequality holds for all finite sequences $(b_i)_{i<t}$ of sets picked from distinct $B^N_n$ with $n, N\in \omega$.
Now, lower semicontinuity of $\varphi$ gives that, for each infinite sequence $(b_i)_i$ of sets picked from distinct $B^N_n$ with $n, N\in \omega$, we have
\begin{equation}\label{E:unb}
\varphi( \bigcup_i b_i)\leq K_M.
\end{equation}

From this point on, we follow the lines of the proof of \cite[Theorem~2.4]{HRZ} with appropriate modifications.
Let $C^k$, $k\in\omega$, be a partition of $\omega$ into infinite sets.
For each $N$, let $(f_p^N)_p$ be a sequence of functions
\[
f_p^{k,N}\colon C^k\to \bigcup_{n\in C^k} B^N_n
\]
such that $f_p^{k,N}(n)\in B^N_n$ for all $n\in C^k$ and, for all $k$ and $a\in [\omega]^{l_N}$,
\begin{equation}\label{E:bn}
\{ f^{k,N}_p(n)\colon p\in a\} = B^N_n,\hbox{ for some }n\in C^k.
\end{equation}
Such sequences can be found by \cite[Claim 2.5, p.\,33]{HRZ}.
Now for each $p$, we let
\[
J^k_p= \bigcup_N \bigcup_{n\in C^k}  f^{k,N}_p(n).
\]
From the definition of $J^k_p$, by \eqref{E:bn} and \eqref{E:BB}, we get that, for each $N$, 
\begin{equation}\label{E:sms}
\varphi(\bigcup_{p\in a} J^k_p)\geq N, \hbox{ for all }k\hbox{ and }a\in [\omega]^{l_N}.
\end{equation}
On the other hand, observe that, for each $g\in \omega^\omega$, the set $\bigcup_k J^k_{g(k)}$ is the union of a sequence $(b_i)$, where sets $b_i$ are chosen from distinct
$B^N_n$ with $n, N\in \omega$. Thus,
\eqref{E:unb} implies that
\begin{equation}\label{E:gg}
\varphi(\bigcup_k J^k_{g(k)} ) \leq N_M, \hbox{ for each }g\in \omega^\omega.
\end{equation}

Let ${\mathcal A}\subseteq \omega^\omega$ be a perfect family of eventually different elements of $\omega^\omega$.
For $g\in {\mathcal A}$, define
\[
G(g)= \bigcup_k J^k_{g(k)}.
\]
It is clear that $G\colon {\mathcal A}\to 2^\omega$ is a continuous injection and
\eqref{E:gg} implies that $G(g)\in \Id$ for each $g\in {\mathcal A}$.

We check that the family $\{ G(g)\colon g\in {\mathcal A}\}$ is strongly unbounded in $\Id$.
Let $g_i$, $i<l_N$, be distinct elements of $\mathcal A$. Then,
there exists $k\in \omega$ such that $g_i(k)$ are all distinct for $i<l_N$. Therefore, by \eqref{E:sms}, we get
\[
\varphi\left(\bigcup_{i<l_N} G(g_i)\right)\geq \varphi(\bigcup_{i<l_N} J^k_{g_i(k)})\geq N.
\]
Thus, if $g_i\in {\mathcal A}$, $i\in\omega$, are all distinct, then
\[
\bigcup_i G(g_i)\not\in \Id,
\]
and the conclusion follows.
\end{proof}

\section{Characterizations of gradually flat ideals}

In this section we give various characterizations of gradually flat ideals. To formulate our theorem, we need to introduce some notions.

Recall that a subset $X$ of a partial order $P$ is called {\bf weakly bounded} if 
every infinite subset of $X$ contains an infinite bounded subset. (Note that this condition is equivalent to $X$ not containing an infinite strongly unbounded subset, where strong unboundedness is defined in Section~\ref{FGF}.) 
Following \cite{LV}, we say that a partial order $P$ is {\bf $\sigma$-weakly bounded}
if $P$ is the union of a countable family of weakly bounded sets.

Let $\Id$ be an ideal of subsets of $\omega$. We will use another game introduced by Fremlin in \cite{fremlin}. 
Given an ideal $\Id$, the game $\Gamma_1(\Id)$ is a two player game, where the $n$-th move for each player is described  as follows.

\smallskip
\noindent By convention, $U_{-1} =\Id$. Given  $U_{n-1}\subseteq \Id$,

Player I plays a countable cover $\mathcal{U}_n$ of $U_{n-1}$,

Player II responds with $U_n\in\mathcal{U}_n$ and a sequence $\ang{x^n_i}_i$ in $U_n$,

Player I then chooses a subsequence by playing $A_n\in[\omega]^{\omega}$,

Player II plays the last bit by choosing $m_n\in\omega$.

\smallskip

\noindent More graphically, the $n$-th move is represented as follows.

\bigskip

\begin{tabular}{cccccccc}
%& & & & & &
\multicolumn{1}{c|}{$I$} &$\mathcal U_{n}$ countable cover of $U_{n-1}$ &&$A_n\in [\omega]^\omega$    \\ \hline
\multicolumn{1}{c|}{$II$}& &$U_n\in \mathcal U_n$,  $\ang{x^n_i}_i\subseteq U_n$& &$m_n\in\omega$  \\
\end{tabular}\\

\bigskip
After the $n$-th move has been made, we define
\begin{equation}\label{E:baga}
b_n=\bigcup\{x^n_i:i\in A_n\cap m_n\}\in\mathcal I.
\end{equation}
We declare that Player I wins if the sequence $\ang{b_n}_n$ is bounded in $\Id$, that is, if $\bigcup_n b_n\in \Id$;
otherwise Player II wins.

We say that an ideal $\Id$ is {\bf Fremlin} if Player I has a winning strategy in the game $\Gamma_1(\Id)$.

Two  results from \cite{fremlin} concerning Fremlin ideals relevant to us are:
\begin{enumerate}
\item[---] being Fremlin is a property that is downward closed under Tukey reduction;
\item[---] the density zero ideal ${\mathcal Z}_0$ is Fremlin; hence, if $\Id$ is Tukey below $\mathcal{Z}_0$, then $\Id$ is Fremlin.
\end{enumerate}

We can now state our characterization theorem.

  \begin{theorem}\label{thm8} If $\mathcal I$ is a flat ideal, then the following conditions are equivalent.
  \begin{enumerate}
  \item[(i)] $\mathcal I$ is gradually flat.
   \item[(ii)] $\mathcal I\leq_T \mathcal Z_0$.
   \item[(iii)] $\mathcal I\leq_T l_1$.
    \item[(iv)] $\mathcal  I \not \equiv_T [\mathfrak c]^{<\omega}$.
\item[(v)] $\mathcal I$ is Fremlin.
   \item[(vi)]  $\mathcal I$ is $\sigma$-weakly bounded.
   \item[(vii)] $\omega^\omega\not\leq_T \mathcal I$.
  
\end{enumerate}
 \end{theorem}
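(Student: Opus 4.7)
The plan is to route the equivalences through condition (ii) and condition (iv), with only one substantial construction---namely (i) $\Rightarrow$ (ii)---required. The equivalence (i) $\Leftrightarrow$ (iv) is immediate from the dichotomy Theorem~\ref{dichotomy}. From (ii) one obtains (iii) by transitivity along $\mathcal Z_0 \leq_T l_1$ from the standard diagram; (v) follows from the cited facts that $\mathcal Z_0$ is Fremlin and that being Fremlin is Tukey-downward closed; and (vi) follows from the result of \cite{LV} that $\mathcal Z_0$ is $\sigma$-weakly bounded, together with the downward closure of $\sigma$-weak boundedness under Tukey reduction (via the standard preimage argument: since a Tukey map sends unbounded to unbounded, the preimage of a weakly bounded set is weakly bounded). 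Then (vi) implies (vii): if $\omega^\omega \leq_T \mathcal I$, then $\omega^\omega$ inherits a $\sigma$-weakly bounded decomposition, but any countable decomposition of $\omega^\omega$ must, by pigeonhole, place infinitely many of the constant functions $f_n \equiv n$ into a single piece, and any infinite family of such constants is strongly unbounded in $\omega^\omega$ (its pointwise supremum is everywhere infinite), so that piece cannot be weakly bounded.

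Each of (iii), (v), (vi), (vii) implies (iv) by observing that $[\mathfrak c]^{<\omega}$ falsifies each. Explicitly: $l_1 <_T [\mathfrak c]^{<\omega}$ rules out (iii) whenever $\mathcal I \equiv_T [\mathfrak c]^{<\omega}$; Player II wins $\Gamma_1([\mathfrak c]^{<\omega})$ by selecting at each round an uncountable member of Player I's cover and playing fresh singletons from it, so $[\mathfrak c]^{<\omega}$ is not Fremlin, ruling out (v); the $\mathfrak c$-sized family of singletons in $[\mathfrak c]^{<\omega}$ is strongly unbounded, and by pigeonhole any countable decomposition places infinitely many singletons into a single piece which is therefore not weakly bounded, ruling out (vi); and $\omega^\omega \leq_T [\mathfrak c]^{<\omega}$ by the diagram rules out (vii). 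Combined with (i) $\Leftrightarrow$ (iv), it remains only to prove (i) $\Rightarrow$ (ii).

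Given a gradually flat submeasure $\varphi$ for $\mathcal I$, write $M \mapsto N(M)$ for a common witness to flatness and gradual flatness, and $(M,l) \mapsto j(M,l)$ for the threshold provided by gradualness. The idea is to recursively carve $\omega$ into intervals $I_k$ together with parameters $M_k \to \infty$ and $l_k \to \infty$ so that $\min I_k > j(M_k, l_k)$ and each $I_k$ is subdivided into many sub-blocks on which the gradual-flatness bound $N_k = N(M_k)$ is active. Define $h \colon \mathcal I \to \mathcal Z_0$ by letting $h(a)$ record, at level $k$, the indices of those sub-blocks of $I_k$ on which $a$ has $\varphi$-mass at least $M_k$. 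By gradual flatness applied to the disjoint bulky sub-blocks inside $I_k$, at most $l_k$ of them can occur at level $k$ without the union exceeding $N_k$; keeping $l_k$ much smaller than the sub-block count forces the density of $h(a)$ to tend to $0$. Conversely, any unbounded family in $\mathcal I$ must exhibit $\varphi$-mass exceeding each $M_k$ at arbitrarily many levels, producing bulky sub-blocks in arbitrarily many intervals and yielding a union of positive upper density in the image, hence unbounded in $\mathcal Z_0$.

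The principal obstacle is the joint calibration in (i) $\Rightarrow$ (ii): the interval sizes, the thresholds $M_k$, and the exponents $l_k$ must be chosen so that simultaneously (a) gradual flatness limits the bulky-block count at each level to $l_k$, giving density zero of $h(a)$, and (b) unbounded families in $\mathcal I$ produce bulky sub-blocks with sufficient frequency to be positive-density unbounded. Balancing these opposing constraints---keeping $l_k$ small compared to the interval capacity while also keeping $M_k$ growing fast enough to catch divergent $\varphi$-mass but slowly enough for $N_k$ to remain manageable---is the heart of the argument; the remainder of the proof assembles from the easy implications above and the cited facts.
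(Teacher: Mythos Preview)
There are two genuine gaps.

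The main one is in (i)$\Rightarrow$(ii): the map $h$ you describe is not Tukey, and no amount of calibration will fix it. Your $h(a)$ records only the \emph{indices} of sub-blocks on which $\varphi(a\cap b)\geq M_k$, discarding what $a$ looks like elsewhere. In any gradually flat ideal that is not countably generated there is $M$ with $\{n:\varphi(\{n\})<M\}\notin\mathcal I$ (otherwise the ideal is generated by the sets $\{n:\varphi(\{n\})<M\}$); the family of these singletons is unbounded in $\mathcal I$, yet each $h(\{n\})$ lies inside the fixed finite set of sub-block indices at the finitely many levels $k$ with $M_k\leq M$, so the image is bounded in $\mathcal Z_0$. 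Your appeal to gradual flatness to cap the number of bulky sub-blocks is also misdirected: gradualness bounds the mass of a union of $\leq l$ sets of \emph{small} mass; it says nothing about how many pieces of \emph{large} mass a set can contain, since submeasures are not superadditive. The paper's map is different in kind: for each interval $I^k_l$ it encodes the \emph{full restriction} $x\cap I^k_l$ as an integer via an injection $\pi_k$, landing in an auxiliary ideal $\bigoplus_k\mathcal J^{\alpha_k}\leq_T\mathcal Z_0$. A bound there forces, for all large $l$, at most $2^l$ distinct restrictions $x\cap I^k_l$ among the preimages, and \emph{then} gradual flatness applies correctly---to these $\leq 2^{l+1}$ small-mass pieces---to bound the union in $\mathcal I$.

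There is a second error in the route to (vi): $\mathcal Z_0$ is \emph{not} $\sigma$-weakly bounded. Since $\omega^\omega\leq_T\mathcal Z_0$, and you yourself observe that $\sigma$-weak boundedness is Tukey-downward closed and fails for $\omega^\omega$, it must fail for $\mathcal Z_0$ as well. The paper reaches (vi) from (v) via Lemma~\ref{fimpliesswb} (every Fremlin $F_\sigma$ ideal is $\sigma$-weakly bounded); since you already have (ii)$\Rightarrow$(v), inserting this lemma closes the circuit once (i)$\Rightarrow$(ii) is repaired.
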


In relation to considering gradually flat ideals with the Tukey order,
we point out that it follows from \cite[Proposition~5.28]{Ma} that the class of gradually flat ideals equipped with $\leq_T$ is rich; namely, the
quasi-order $({\mathcal P}(\omega), \subseteq^*)$, where $\subseteq^*$ is inclusion modulo finite sets, embeds into it.
To see this, it suffices to notice that the ideals considered in \cite[Proposition~5.28]{Ma} are flat by their definitions, and
they are gradually flat by \cite[formula (5.4)]{Ma}.

Theorem~\ref{thm8} above raises some natural questions.

\begin{question} Let $\mathcal I$ be an $F_\sigma$ ideal of subsets of $\omega$.
\begin{enumerate}
    \item[(1)] Is $\Id\equiv_T [\mathfrak c]^{<\omega}$ or $\Id\leq_T l_1$?
    \item[(2)] Assume $\omega^\omega\leq_T \Id$. Is $l_1\leq_T\Id$?
    \item[(3)] (\cite{LV}) Assume $\omega^\omega\not\leq_T \Id$. Is $\Id$ $\sigma$-weakly bounded?
\end{enumerate}
\end{question}
It may be worth mentioning in the context of the question above that, by \cite{LV}, if $\Id$ is an analytic ideal of subsets of $\omega$ and
$\omega^\omega\not\leq_T \Id$, then $\Id$ is $F_{\sigma}$.

In our proof of Theorem~\ref{thm8}, we start with a lemma connecting Fremliness and $\sigma$-weak boundedness. The lemma is somewhat more general than needed
in order to prove Theorem~\ref{thm8}, and may be of some independent interest.

\begin{lemma}\label{fimpliesswb}
If $\Id$ is a Fremlin $F_\sigma$ ideal, then $\Id$ is $\sigma$-weakly bounded.
\end{lemma}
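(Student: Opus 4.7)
My plan is to prove the stronger statement that whenever $\sigma$ is a winning strategy for Player I in $\Gamma_1(\Id)$, Player I's first move $\mathcal{U}_0 = \sigma(\emptyset)$ is already a countable cover of $\Id$ by weakly bounded sets; this directly witnesses $\sigma$-weak boundedness. Fix $U \in \mathcal{U}_0$, and suppose toward a contradiction that $X = \{x_i : i \in \omega\} \subseteq U$ is infinite and strongly unbounded, listed with distinct entries. I will defeat $\sigma$ by feeding $X$ through the game and extracting one new element of $X$ as each $b_n$.

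In round $0$, Player II plays $U_0 = U$ together with the sequence $\ang{x^0_i}_i$ enumerating $X$; upon receiving $A_0 \in [\omega]^\omega$ from Player I, Player II chooses $m_0 = \min A_0 + 1$, so that $A_0 \cap m_0 = \{\min A_0\}$ and $b_0 = x^0_{\min A_0}$ is a single element of $X$. Inductively, for round $n+1$, suppose $b_0,\dots,b_n$ are distinct elements of $X$ and $\ang{x^n_i}_i$ enumerates an infinite subset of $X$ contained in $U_n$. Set $Y_n = \{x^n_i : i \in A_n\} \setminus \{b_0,\dots,b_n\}$, which is infinite. Since Player I's move $\mathcal{U}_{n+1}$ is a countable cover of $U_n \supseteq Y_n$, pigeonhole supplies some $U_{n+1} \in \mathcal{U}_{n+1}$ with $Y_n \cap U_{n+1}$ infinite; Player II plays this $U_{n+1}$ and enumerates $Y_n \cap U_{n+1}$ as $\ang{x^{n+1}_i}_i$, then picks $m_{n+1} = \min A_{n+1} + 1$, making $b_{n+1} \in Y_n$ a new element of $X$.

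By construction, $\{b_n : n \in \omega\}$ is an infinite subset of the strongly unbounded set $X$, hence is itself unbounded in $\Id$, so $\bigcup_n b_n \notin \Id$ and Player II wins this run, contradicting that $\sigma$ is winning. The only real point to keep track of is that each $b_{n+1}$ be a new element of $X$, which is exactly why $Y_n$ is defined with the previous $b_i$'s removed; since deleting finitely many points from an infinite set leaves it infinite, the pigeonhole step continues to apply at every stage, and I do not anticipate any genuine obstacle beyond this inductive bookkeeping.
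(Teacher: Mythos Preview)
Your argument is correct, and it actually establishes more than the paper's version of the lemma: you never invoke the $F_\sigma$ hypothesis, so your proof shows that \emph{any} Fremlin ideal of subsets of $\omega$ is $\sigma$-weakly bounded, and moreover that Player~I's opening cover already witnesses this.

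The paper's proof proceeds differently. It argues the contrapositive: assuming $\Id$ is not $\sigma$-weakly bounded, at every stage one can choose $U_n\in\mathcal U_n$ that is itself not $\sigma$-weakly bounded, play a strongly unbounded sequence inside it, and then---using a lsc submeasure $\varphi$ with $\Id=\mathrm{Fin}(\varphi)$---pick $m_n$ large enough that $\varphi(b_n)>n$, forcing $\bigcup_n b_n\notin\Id$. Thus the paper leans on the submeasure to quantify unboundedness round by round, whereas you sidestep this entirely by arranging that the $b_n$'s are pairwise distinct members of a \emph{single} strongly unbounded set $X$ fixed at the outset, so unboundedness of $\{b_n:n\in\omega\}$ is immediate from the definition. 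The trade-off is that the paper's approach is the obvious one once you have Mazur's representation in hand, while yours requires the extra bookkeeping (the pigeonhole step and removing earlier $b_i$'s) but rewards you with a cleaner, hypothesis-free statement.
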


\begin{proof}
Let $\varphi$ be a lower semicontinuous submeasure such that $\Id=\rm{Fin}(\varphi)$, and let $\tau$ be a winning strategy for Player I in $\Gamma_1(\Id)$.
Assume, in order to reach a contradiction, that $\Id$ is not $\sigma$-weakly bounded. We will play against $\tau$ and produce a play in which Player II wins:
$\Id=U_0$ is by assumption not $\sigma$-weakly bounded. After move $n-1$ has been made, assume that $U_{n-1}$ is chosen to be  
the restriction of $I$ to $U_{n-1}$ is not $\sigma$-weakly bounded. Now, in the $n$-th move, if $\cU_n=\tau(U_{n-1})$ is a covering for $U_{n-1}$, there must be $U_n\in\cU_n$ that is not $\sigma$-weakly bounded,
we play such $U_n$ together with a strongly unbounded sequence $(x^n_i)_i$ in $U_n$. Once $\tau$ has chosen $A_n\in[\omega]^{\omega}$, as any  subsequence of
$(x^n_i)_i$ is strongly unbounded, we can find $m_n$ large enough so that $\varphi(b_n)>n$, where $b_n$ is as in \eqref{E:baga}.
At the end, this play against $\tau$ would have produced the sequence
$(b_n)_n$ which is unbounded in $\Id$.
\end{proof}

Now, we prove a lemma on gradually flat ideals that will be our main technical tool when dealing with such ideals in the proof of Theorem~\ref{thm8}.

\begin{lemma}\label{gradual}
Let $\Id$ be gradually flat.
There are sequences $\mathcal{X}_k, {\mathcal Y}_k$, $k\in\omega$, of downward closed subsets of $\Id$ such that
\begin{enumerate}
\item[(i)] $\bigcup_k {\mathcal X}_k =\Id$;

\item[(ii)] $\overline{{\mathcal Y}_k} \subseteq \Id$, for each $k$;

\item[(iii)] for each $k, l$, there exists $g_{k,l}\in\omega^\omega$ with
\[
\forall n \ \big(\forall a\in \mathcal{Y}_{k}\cap \mathcal{P}(n)\big)\ \big(\forall B\in
[\mathcal{X}_k\cap {\mathcal P}\big(\omega\setminus g_{k,l}(n)\big)]^{\leq l}\big)\left(a\cup\bigcup B\in\mathcal{Y}_{k}\right).
\]
\end{enumerate}
\end{lemma}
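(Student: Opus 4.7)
The plan is to build the sets $\mathcal{X}_k$ and $\mathcal{Y}_k$ as $\varphi$-sublevel sets for a gradually flat submeasure $\varphi$ with $\Id = \mathrm{Fin}(\varphi)$. Concretely, for each $k$, apply gradualness with $M = k$ to obtain $N_k > 0$ and, for every $l$, a threshold $j_{k,l}$ such that any family $B \in [\mathcal{P}(\omega\setminus j_{k,l})]^{\leq l}$ with $\max_{b\in B}\varphi(b) < k$ satisfies $\varphi(\bigcup B) < N_k$. Then apply flatness with $M = N_k$ to obtain $N_k'$ such that for each finite $a$ with $\varphi(a) < N_k'$ there exists $j'(a)$ with the property that any $b \subseteq \omega\setminus j'(a)$ satisfying $\varphi(b) < N_k$ gives $\varphi(a\cup b) < N_k'$. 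Define
\[
\mathcal{X}_k = \{a \in \mathcal{P}(\omega) : \varphi(a) < k\} \quad\text{and}\quad \mathcal{Y}_k = \{a \in \mathcal{P}(\omega) : \varphi(a) < N_k'\}.
\]
Both are downward closed by monotonicity of $\varphi$, and by lower semicontinuity $\overline{\mathcal{Y}_k} \subseteq \{a : \varphi(a) \leq N_k'\} \subseteq \Id$, giving (ii). For (i), every $a\in\Id$ has $\varphi(a) < \infty$, hence $a \in \mathcal{X}_k$ for $k$ large.

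For (iii), the key observation is that although flatness gives $j'(a)$ depending on $a$, only finitely many $a \subseteq n$ arise for each fixed $n$, so we can amalgamate. Define
\[
g_{k,l}(n) = \max\bigl(\{j_{k,l}\} \cup \{j'(a) : a \subseteq n,\ \varphi(a) < N_k'\}\bigr),
\]
which is a natural number because $\mathcal{P}(n)$ is finite. Now take $a \in \mathcal{Y}_k \cap \mathcal{P}(n)$ and $B \in [\mathcal{X}_k \cap \mathcal{P}(\omega\setminus g_{k,l}(n))]^{\leq l}$. Each $b\in B$ lies in $\mathcal{P}(\omega\setminus j_{k,l})$ and has $\varphi(b) < k$, so by the choice of $N_k$ from gradualness, $\varphi(\bigcup B) < N_k$. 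Moreover $\bigcup B \subseteq \omega\setminus g_{k,l}(n) \subseteq \omega\setminus j'(a)$, so flatness applied to $a$ with the set $b = \bigcup B$ yields $\varphi(a\cup\bigcup B) < N_k'$, i.e.\ $a\cup\bigcup B \in \mathcal{Y}_k$, as required.

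The main obstacle is precisely the $a$-dependence of the constant $j'(a)$ supplied by the definition of flatness: the conclusion (iii) demands a single function $g_{k,l}$ that works uniformly for all $a \in \mathcal{Y}_k \cap \mathcal{P}(n)$. This is resolved by the finiteness of $\mathcal{P}(n)$, which lets us take a finite maximum. A secondary point is the correct calibration of parameters, separating the two applications of the hypotheses: gradualness is applied first to control unions of many elements of $\mathcal{X}_k$, and flatness is then applied with $M = N_k$ to absorb such a union into an element of $\mathcal{Y}_k$; it is essential to apply the conditions in this order so that the bound obtained from gradualness serves as input to flatness.
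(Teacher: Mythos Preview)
Your proof is correct and follows essentially the same approach as the paper: both define $\mathcal{X}_k$ and $\mathcal{Y}_k$ as $\varphi$-sublevel sets, apply gradualness first to bound $\varphi(\bigcup B)$, then flatness to absorb this union into $a$, and handle the $a$-dependence of the flatness threshold by taking a finite maximum over $\mathcal{P}(n)$. The only differences are cosmetic (the paper uses $k+1$ in place of your $k$, and packages the finite-maximum step into a function $h_k'(n)$ without explicit comment), so your write-up is in fact slightly more transparent on this point.
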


\begin{proof} Fix a gradually flat submeasure $\varphi$ such that $\Id={\rm Fin}(\varphi)$. Given $k\in\omega$, there exists
$M_k>0$ as in the definition of gradual submeasure 
(with $k+1$ playing the role of $M$ and $M_k$ playing the role of $N$), for which
there is a function $h_k\in \omega^\omega$ such that
\[
\forall l\ \forall B\in [{\mathcal P}(\omega\setminus h_k(l))]^{\leq l} \big( \max_{b\in B}\varphi(b)< k+1\Rightarrow \varphi(\bigcup B)<M_k\big).
\]
Given this $M_k$, we find $N_k>0$ as in the definition of a flat submeasure, for which there is a function
$h_k'\in\omega^\omega$ such that for each $n$ and each $a\in {\mathcal P}(n)$
with $\varphi(a)<N_k$, we have
\[
\forall b\in {\mathcal P}(\omega\setminus h_k'(n)) \big(\varphi(b)<M_k\Rightarrow \varphi(a\cup b)<N_k\big).
\]

For $k\in\omega$, set
\[
{\mathcal X}_k = \{ x\in {\mathcal P}(\omega) \colon \varphi(x)<k+1\}\;\hbox{ and }\; {\mathcal Y}_k = \{ x\in {\mathcal P}(\omega) \colon \varphi(x)<N_k\}.
\]
Point (i) is obvious from the definition of ${\mathcal X}_k$. Point (ii) follows from the set 
$\{ x\in {\mathcal P}(\omega)\colon \varphi(x)\leq N_k\}$ being a closed subset of ${\mathcal P}(\omega)$, by semicontinuity of $\varphi$, and from
\[
{\mathcal Y}_k \subseteq \{ x\in {\mathcal P}(\omega)\colon \varphi(x)\leq N_k\}\subseteq \Id.
\]

Let
\[
g_{k,l}(n) =\max( h_k(l), h_k'(n)).
\]
Observe that $g_{k,l}$ has the following property
\begin{equation}\label{E:gl}
\begin{split}
\forall n &\forall a\in {\mathcal P}(n) \Big( \phi(a)<N_k \Rightarrow\\
&\forall B\in [{\mathcal P}(\omega\setminus g_{k,l}(n))]^{\leq l} \big( \max_{b\in B}\phi(b)< k+1\Rightarrow \varphi(a\cup \bigcup B)<N_k\big)\Big).
\end{split}
\end{equation}
Property \eqref{E:gl} implies (iii). The lemma follows.
\end{proof}

\begin{proof}[Proof of Theorem~\ref{thm8}]
$(i)\Rightarrow(ii)\;$ We will use the following ideal introduced in \cite{LV}. Let $\alpha\in\omega^\omega$, and define
\[
\mathcal{J}^{\alpha}=\{A\subseteq\omega\times\omega :\forall n\ A(n)\subseteq\alpha(n) \hbox{ and }|A(n)|/2^n\rightarrow 0\},
\]
where $A(n) = \{ m\in\omega\colon (n,m)\in A\}$. (Such an ideal is called ${\mathcal J}^{\alpha, (2^n)}( c_0)$ in \cite{LV}. For convenience, we shortened this piece of notation.)
It was proved in \cite[Proposition~4(b) and Remark pp.\,186,\,187]{LV} that
\begin{equation}\label{E:tuz}
\mathcal{J}^{\alpha}\leq_T {\mathcal Z}_0.
\end{equation}

Now, given a sequence $\alpha_k\in \omega^\omega$, $k\in\omega$, we consider the ideal
\[
\bigoplus_k {\mathcal J}^{\alpha_k} = \{ A\subseteq \omega\times\omega\times\omega \colon \big( \forall k \ A(k) \in {\mathcal J}^{\alpha_k}\big) \hbox{ and } \big(\exists k_0\forall k\geq k_0 \ A(k)=\emptyset\big) \},
\]
where $A(k) = \{ (n,m)\in\omega\colon (k, n,m)\in A\}$.
We argue that 
\begin{equation}\label{E:sumi}
\bigoplus_k {\mathcal J}^{\alpha_k}\leq_T{\mathcal Z}_0.
\end{equation}
Indeed, for each $k$, fix a Tukey map $f_k\colon {\mathcal J}^{\alpha_k} \to {\mathcal Z}_0$ that exists by \eqref{E:tuz}. For $A\in \bigoplus_k {\mathcal J}^{\alpha_k}$,
let $k_A$ be the smallest element of $\omega$ such that $A(k)=\emptyset$ for all $k\geq k_A$. Define
\[
F(A)= k_A\cup \bigcup_{k< k_A} f_k\big(A(k)\big).
\]
Obviously, $F\colon \bigoplus_k {\mathcal J}^{\alpha_k}\to {\mathcal Z}_0$, and it is easy to check that it is a Tukey map.

By \eqref{E:sumi}, it suffices to show that, for appropriately chosen $\alpha_k$, $k\in \omega$, we have
\begin{equation}\label{E:JZ}
\Id\leq_T \bigoplus_k {\mathcal J}^{\alpha_k}.
\end{equation}

We define $\alpha_k\in\omega^\omega$, for $k\in\omega$.
Let $g_{k,l}$, ${\mathcal X}_k$, and ${\mathcal Y}_k$, $k,l\in\omega$, be as in the conclusion of Lemma~\ref{gradual}. Set
\[
m^k_0=0 \;\hbox{ and }\; m^k_{l} = g_{k, 2^{l+1}}(m^k_{l-1}), \hbox{ for }l>0.
\]
Then, by Lemma~\ref{gradual}(iii), the sequence $(m^k_l)$ has the following property for each $l>0$
\begin{equation}\label{E:long0}
\begin{split}
\forall a\in {\mathcal P}(m^k_{l-1}) \ \forall B\in [{\mathcal P}(\omega\setminus m^k_{l})]^{\leq 2^{l+1}} \big( ( a\in {\mathcal Y}_k\; &\hbox{and }B\subseteq {\mathcal X}_k)\\
&\Rightarrow a\cup \bigcup B \in {\mathcal Y}_k\big).
\end{split}
\end{equation}
Let $B_l$, for $l\in\omega$, and $l_0$ be such that
\begin{equation}\label{E:BL0}
B_l\subseteq {\mathcal X}_k\cap {\mathcal P}\big([m^k_l, m^k_{l+1})\big)\hbox{ and } |B_l|\leq 2^{l+1}, \hbox{ for all }l\geq l_0.
\end{equation}
By using \eqref{E:long0} recursively and  then taking the union, we get that 
\begin{equation}\label{similar} 
\bigcup_{l>l_0/2} \bigcup B_{2l} \in \overline{{\mathcal Y}_k}\;\hbox{ and }\; \bigcup_{l\geq l_0/2} \bigcup B_{2l+1} \in \overline{{\mathcal Y}_k}.
\end{equation}
Since the closure of $\mathcal Y_k$ is contained in $\mathcal I$, we have $\bigcup_{l} \bigcup B_{l} \in \Id$, from which we draw the following immediate conclusion
\begin{equation}\label{E:bds0}
\{ x\in {\mathcal P}(\omega)\colon \forall l \ \big( x\cap [m^k_l, m^k_{l+1}) \in B_l\big)\} \hbox{ is bounded in }\Id.
\end{equation}
For $l\in\omega$, let
\[
I^k_l= [m^k_l, m^k_{l+1}).
\]
Fix a one-to-one function $\pi_k:\bigcup_l\mathcal{P}(I^k_l)\to \omega$, and define
$\alpha_k\in \omega^\omega$ by letting
\[
\alpha_k(l)=1+\max \pi_k\big(\mathcal{P}(I^k_l)\big).
\]

Now we produce a function $\Psi\colon\Id \to \bigoplus_k {\mathcal J}^{\alpha_k}$.
For $x\in\mathcal{I}$, we have $\varphi(x)<\infty$, which allows us to define 
$k_x\in\omega$ to be the smallest $k$ with $x\in {\mathcal X}_k$.
Let $A_x\subseteq \omega\times\omega$ be defined by
\[
(l,m)\in A_x\Leftrightarrow x\cap I^{k_x}_l\not=\emptyset\hbox{ and }m=\pi(x\cap I^{k_x}_l).
\]
Observe that for every $x$, $A_x\in \mathcal{J}^{\alpha_{k_x}}$. Define $\Psi$ by
$\Psi(x)= \{ k_x\}\times A_x$. It is clear that $\Psi\colon \Id\to \bigoplus_k {\mathcal J}^{\alpha_k}$.

We claim that $\Psi$ is Tukey. Let $A\in \bigoplus_k\mathcal{J}^{\alpha_k}$ and
\[
\mathcal{A}=\{x\in\mathcal{I}:\Psi(x)\subseteq A\}.
\]
We need to see that $\bigcup {\mathcal A}\in \Id$.
As $A\in \bigoplus_k\mathcal{J}^{\alpha_k}$, we have that $A(k)=\emptyset$ for all but finitely many $k$. Thus, it suffices to see that for a fixed $k$
\begin{equation}\notag
\bigcup \{ x\in {\mathcal X}_k \colon A_x\subseteq A(k)\}\in \Id.
\end{equation}
Fix $k$, and set $B=A(k)$, $I_l = I^k_l$, and
${\mathcal B}= \{ x\in {\mathcal X}_k\colon A_x\subseteq B\}$.
We need to see that
\begin{equation}\label{E:sata}
{\mathcal B}\hbox{ is bounded in }\Id.
\end{equation}
Since $B\in {\mathcal J}^{\alpha_k}$,
there is $l_0$ such that for all $l\geq l_0$, $|B(l)|<2^l$. So, for each $l\geq l_0$,
\[
|\{x\cap I_l: x\in {\mathcal B}\}|=|\{m: \exists x\in {\mathcal B}\ \pi(x\cap I_l)=m \}|\leq|B(l)|<2^l.
\]
Hence, for each $l\geq l_0$, if $|\{x\cap I_l: x\in {\mathcal B}\}|<2^l$, and,
therefore, by \eqref{E:bds0}, we get \eqref{E:sata}, as required.

$(ii)\Rightarrow(iii)\;$ follows from $\mathcal Z_0\leq_T l_1$, see \cite{LV}.

$(iii)\Rightarrow(iv)\;$ follows from $l_1<_T [\mathfrak c]^{<\omega}$, see \cite{LV}.

$(iv)\Rightarrow(i)\; $  Since $\Id$ is flat, by Theorem~\ref{dichotomy}, $\Id$ is gradually flat.

$(i)\Rightarrow(v)\;$ We describe a winning strategy for Player I  in $\Gamma_1(\mathcal I)$. 
Let the sets $\mathcal{X}_k$ and ${\mathcal Y}_k$, $k\in\omega$, be as in the conclusion of Lemma~\ref{gradual}. By Lemma~\ref{gradual}(i), the family
$\{ {\mathcal X}_k\colon k\in \omega\}$ forms a covering of $\Id$.
In move $0$, Player I plays this covering. Player II responds by picking a set ${\mathcal X}_k$ and a sequence $(x_i)$ of elements of ${\mathcal X}_k$. So $k$ is fixed by Player II.
For the fixed $k$, we perform the following analysis.
Let $g_{k,l}$, $k,l\in\omega$, be as in the conclusion of Lemma~\ref{gradual}. Since $k$ is fixed, we set $g_l=g_{k,l}$. Define
\[
m_0=0 \;\hbox{ and }\; m_{l} = g_{(l+1)^2+1}(m_{l-1}), \hbox{ for }l>0.
\]
Then, by Lemma~\ref{gradual}(iii), the sequence $(m_l)$ has the following property for each $l>0$
\begin{equation}\label{E:long}
\begin{split}
\forall a\in {\mathcal P}(m_{l-1}) \ \forall B\in [{\mathcal P}(\omega\setminus m_{l})]^{\leq (l+1)^2+1} \big( ( a\in {\mathcal Y}_k\; &\hbox{and }B\subseteq {\mathcal X}_k)\\
&\Rightarrow a\cup \bigcup B \in {\mathcal Y}_k\big).
\end{split}
\end{equation}
Let $B_l$, for $l\in\omega$, be such that
\begin{equation}\label{E:BL}
B_l\subseteq {\mathcal X}_k\cap {\mathcal P}\left([m_l, m_{l+1})\right)\hbox{ and } |B_l|\leq (l+1)^2+1.
\end{equation}
By an argument similar to the one justifying \eqref{similar}, it follows from \eqref{E:long} that
\[
\bigcup_{l>0} \bigcup B_{2l} \in \overline{{\mathcal Y}_k}\;\hbox{ and }\; \bigcup_{l} \bigcup B_{2l+1} \in \overline{{\mathcal Y}_k}.
\]
Thus, by Lemma~\ref{gradual}(ii), we have $\bigcup_{l} \bigcup B_{l} \in \Id$, from which we get
\begin{equation}\label{E:bds}
\{ x\in {\mathcal P}(\omega)\colon \forall l \ \big( x\cap [m_l, m_{l+1}) \in B_l\big)\} \hbox{ is bounded in }\Id.
\end{equation}
Now Player I plays a convergent subsequence $(y^0_i)_{i<\omega}$ of $(x_i)_{i<\omega}$ so that
\begin{equation}\label{E:suse}
\forall l\ |\{ y^0_i\colon i\in \omega\} \cap [m_l, m_{l+1})| \leq l+1.
\end{equation}

Assume players I and II are about to make move $n+1$. Assume we have a sequence of
sets $a_l\subseteq [m_l, m_{l+1})$, $l<n$, such that the set played by Player II in move $p$ with $1\leq p\leq n$ is
\[
U_p= \{ x\in {\mathcal X}_k\colon \forall l<p\ x\cap [m_l, m_{l+1}) = a_l\}.
\]
Now, in move $n+1$, Player I plays the family of sets
\[
V_a = \{ x\in U_n\colon x\cap [m_n, m_{n+1}) = a\}.
\]
Then, still in move $n+1$, Player II picks one of these sets, which amounts to picking
$a_{n+1} \subseteq [m_n, m_{n+1})$. We let $U_{n+1}= V_{a_{n+1}}$. Further, Player II plays an arbitrary sequence
$(x_i)$ in $U_{n+1}$. Then Player I picks a convergent subsequence $(y^{n+1}_i)_{i<\omega}$ of $(x_i)_{i<\omega}$ so that
\begin{equation}\label{E:suse2}
\forall l\geq n+1 \ |\{ y^{n+1}_i\colon i\in \omega\} \cap [m_l, m_{l+1})| \leq l+1.
\end{equation}
This concludes our description of a strategy for Player I. We claim this is a winning strategy.

Indeed, after a run of the game is finished, we set
\[
B_l = \{ y^{n}_i\colon n, i\in \omega\} \cap [m_l, m_{l+1}).
\]
Clearly, $B_l\subseteq {\mathcal X}_k$. It follows from \eqref{E:suse} and \eqref{E:suse2} that $|B_l|\leq l^2+1$. Thus, condition \eqref{E:BL} is fulfilled.
Therefore, the set
\[
\{ y^n_i\colon n,i\in\omega\}
\]
is bounded in $\Id$ as it is included in
\[
\{ x\in {\mathcal P}(\omega)\colon \forall l \ \big( x\cap [m_l, m_{l+1}) \in B_l\big)\}
\]
which is bounded in $\Id$ by \eqref{E:bds}. It follows that Player I wins.

$(v)\Rightarrow(vi)$ follows from Lemma~\ref{fimpliesswb}.

$(vi)\Rightarrow(vii)$ follows directly from \cite[Theorem~1]{LV}.

$(vii)\Rightarrow(i)$ follows from Theorem~\ref{dichotomy}.
\end{proof}


\begin{thebibliography}{4}

\bibitem{bart} T. Bartoszy\'nski, {\em Additivity of measure implies additivity of category}, Trans. Amer. Math. Soc. 281 (1984), 209--213.

\bibitem{fremlin-91} D. H. Fremlin, {\em The partially ordered sets of measure theory and Tukey's ordering}, Note Mat. 11 (1991), 177--214.

\bibitem{fremlin} D. H. Fremlin,
\emph{Games and the Tukey classification}, notes, 2002; posted at

https://www1.essex.ac.uk/maths/people/fremlin/preprints.htm

 \bibitem{Fr2} D. H. Fremlin, \emph{Measure Theory, Volume 5}, University of Essex, Colchester, 2008.

\bibitem{hrusak-filters} M. Hru\v{s}\'ak, {\em Combinatorics of filters and ideals}, in {\em Set Theory and Its Applications}, 29--69,
Contemp. Math., 533, Amer. Math. Soc., 2011.

\bibitem{HRZ} M. Hru\v{s}\'ak, D. Rojas-Rebolledo, J. Zapletal,
\emph{Cofinalities of Borel ideals}, Math. Log. Q. 60 (2014), 31--39.

 \bibitem{isbell} J. R. Isbell, {\em Seven cofinal types}, J. Lond. Math. Soc. 4 (1972), 651--654.

\bibitem{Kec} A. S. Kechris, {\em Hereditary properties of the class of closed sets of uniqueness for trigonometric series}, 
Israel J. Math. 73 (1991), 189--198.

\bibitem{KLW} A. S. Kechris, A. Louveau, W.H. Woodin, {\em The structure of $\sigma$-ideals of compact sets},
Trans. Amer. Math. Soc. 301 (1987), 263--288.

 \bibitem{LV} A. Louveau, B. Veli\v{c}kovi\'{c}, \emph{Analytic ideals and cofinal types}, Ann. Pure Appl. Logic 99 (1999), 171--195.

 \bibitem{MZ} \'E. Matheron, M. Zelen\'y, {\em Descriptive set theory of families of small sets}, Bull. Symb. Logic 13 (2007), 482--537.

 \bibitem{matrai-ken}  T. M\'atrai, \emph{Kenilworth}, Proc. Amer. Math. Soc. 137 (2009), 1115--1125.

 \bibitem{Ma}  T. M\'atrai, \emph{Infinite dimensional perfect set theorems}, Trans. Amer. Math. Soc. 365 (2013), 23--58.

\bibitem{Maz} K. Mazur, {\em $F_\sigma$-ideals and $\omega_1\omega_1^*$-gaps in the Boolean algebras $P(\omega)/I$}, Fund. Math. 138 (1991), 103--111.

\bibitem{moore-solecki} J. T. Moore, S. Solecki, \emph{A $G_\delta$ ideal of compact sets strictly above the nowhere dense ideal in the Tukey order}, Ann. Pure Appl. Logic 156 (2008), 270--273.

\bibitem{schmidt} J. Schmidt, {\em Konfinalit\"at}, Z. Math. Logik Grundlagen Math. 1 (1955), 271--303.


\bibitem{solecki-11} S. Solecki, {\em  $G_\delta$ ideals of compact sets}, J. Eur. Math. Soc. 13 (2011), 853--882.

\bibitem{solecki} S. Solecki, {\em Tukey reductions among analytic directed orders}, Zb. Rad. (Beogr.), 17(25) (Selected topics in combinatorial analysis) (2015), 209--220.

\bibitem{ST04} S. Solecki, S. Todorcevic, {\em Cofinal types of topological directed orders}, Ann. Inst. Fourier (Grenoble) 54 (2004), 1877--1911.

\bibitem{ST11} S. Solecki, S. Todorcevic, {\em Avoiding families and Tukey functions on the nowhere dense
ideal}, J. Inst. Math. Jussieu 10 (2011), 405--435.

\bibitem{todorcevic} S. Todor\v{c}evi\'c, {\em Analytic gaps}, Fund. Math. 150 (1996), 55--66.

\bibitem{tukey} J. W. Tukey, {\em Convergence and Uniformity in Topology}, Ann. Math. Stud. 2, Princeton University Press, 1940.

\bibitem{vojtas} P. Vojt\'a\v{s}, {\em Generalized Galois-Tukey connections between explicit relations  on classical objects of real analysis}, Israel Math. Conf. Proc. 6 (1993), 619--643.

\end{thebibliography}
\end{document}